  \renewenvironment{abstract}{%
	 \if@twocolumn
        \section*{\abstractname}%
      \else
		\small{\quotation \noindent\abstractname}
      \fi}
      {\if@twocolumn\else\endquotation\fi}
    \renewcommand\section{\large\@startsection{section}{1}{\z@}%
                                      {-3.5ex \@plus -1ex \@minus -.2ex}%
                                      {2.3ex \@plus.2ex}%
                                      {\bfseries\textsf}}
\newtheorem{theorem}{Theorem}
\newtheorem{definition}{Definition}
\newtheorem{lemma}{Lemma}
\newtheorem{corollary}{Corollary}
\newtheorem{example}{Example}
\def\min{{\mathrm{min}}}
\def\max{{\mathrm{max}}}
\def\sup{{\mathrm{sup}}}
\renewcommand{\abstractname}{$\square$}
\begin{document}
\markboth{\fontsize{10}{0}Regularizing systems of accretive operator equations}{\fontsize{10}{0}P.K. Anh, N. Buong, and D.V. Hieu}%
\pagestyle{myheadings}
\setlength{\baselineskip}{15pt}

\title{\Large\bf\textsf{PARALLEL METHODS FOR REGULARIZING SYSTEMS\\ OF EQUATIONS INVOLVING ACCRETIVE OPERATORS}}
\author{ \small\bf{Pham Ky Anh\footnote{Corresponding author: Pham Ky Anh, Department of Mathematics, Hanoi University of Science, 334~ Nguyen~Trai, Thanh Xuan, Hanoi, Vietnam. Email address: phamkyanh@hus.edu.vn.}}
\\{\small\it Department of Mathematics, Hanoi University of Science, Hanoi, Vietnam}  
\\ \small\bf{ Nguyen Buong}
\\{\small\it Institute of Information Technology, Vietnamese Academy of Science \& Technology} 
\\ \small\bf{Dang Van Hieu  }
\\{\small\it Department of Mathematics, Hanoi University of Science, Hanoi, Vietnam} 
} 
\date{}
\maketitle
\parbox{15cm}{
\begin{abstract}{}
\medskip  
{\it In this paper, two parallel methods for solving systems of accretive operator equations in Banach spaces are studied. 
The convergence analysis of the methods in both free-noise and noisy data cases is provided.}\\
\medskip

\noindent {\bf Keywords:}  Uniformly smooth and uniformly convex Banach space;  Accretive and inverse uniformly accretive operators; Iterative regularization method; Newton-type method;  Parallel computation. \\
\medskip

\noindent {\bf AMS Subject Classifications}    47J06; 47J25; 65J15; 65J20; 65Y05. 
\end{abstract}}
\section{INTRODUCTION, PRELIMINARIES, AND NOTATIONS}
Various problems of science and engineering, including a multi-parameter identification problem, the convex feasibility problem, a common fixed point problem, etc..., lead to a system of ill-posed operator equations 
\begin{equation}\label{eq1}
A_i(x)=0,\quad x \in X, (i=1,2,\ldots,N),
\end{equation}
where $X$ is a real Banach space and  $A_i:D(A_i) = X \to X$ are possibly nonlinear operators on $X$.

Very recently, several sequential and parallel regularizing methods for solving system $(\ref{eq1})$ have been proposed.
The Kaczmarz method \cite{HLS2007,HKLS2007}, the Newton-Kacmarz method \cite{BK2006}, the steepest-descent-Kaczmarz method \cite{CHLS2008}, parallel iterative regularization methods \cite{AC2009}, parallel regularized Newton-type methods \cite{AC2011, AD2013}, parallel hybrid methods \cite{AC2013}, to name only few. However, most of the investigation of available methods was carried out in the framework of Hilbert spaces.

In this paper we study parallel methods extended to system $(\ref{eq1})$ involving $m$-accretive operators in the setting of  Banach spaces. In the sequel we always assume that system $(\ref{eq1})$ is consistent, i.e., the solution set $S$ of $(\ref{eq1})$ is not empty.  It is known that if $A_i (i=1,\ldots,N)$ are not strongly or uniformly accretive, then system $(\ref{eq1})$ in general is ill-posed, i.e., the solution set $S$ of $(\ref{eq1})$ may not depend continuously on data.  In that case, a process known as regularization should be applied for stable solution of $(\ref{eq1})$. 

In what follows, for  the reader's convenience,  we collect some definitions and results concerning the geometry of Banach spaces and accretive operators, which are used in this paper. We refer the reader to [9-13]
for more details. 

\begin{definition}\label{def.uniconvBspace}
A Banach space $X$ is called
\begin{itemize}
\item  [$1)$] strictly convex if the unit sphere $S_1(0) = \{x \in X: ||x||=1\}$ is strictly convex, i.e., the inequality $||x+y|| <2$ holds for all $x, y \in S_1(0),  x \ne y ;$ 
 \item [$2)$] uniformly convex if for any given $\epsilon >0$ there exists $\delta = \delta(\epsilon)  >0$ such that for all $x,y \in X$ with $\left\|x\right\| \le 1, \left\|y\right\|\le 1,\left\|x-y\right\| = \epsilon$ the inequality $\left\|x+y\right\|\le 2(1-\delta)$ holds.
\end{itemize}
\end{definition}
The modulus of convexity of $X$ is defined by
\begin{equation*}
\delta_{X}(\epsilon)=\inf \left\{1-\frac{\left\|x-y\right\|}{2}: \left\|x\right\| = \left\|y\right\|= 1,\left\|x-y\right\|=\epsilon\right\}.
\end{equation*}
The modulus of smoothness of $X$ is defined by
\begin{equation*}
\rho_X (\tau)=\sup\left\{\frac{\left\|x+y\right\|+\left\|x-y\right\|}{2}-1:\left\|x\right\|=1, \left\|y\right\|=\tau\right\}.
\end{equation*}
\begin{definition}\label{def.modusmooth}
A Banach space $X$ is called uniformly smooth if
\begin{equation*}
\lim_{\tau \to 0}h_X(\tau):=\lim_{\tau \to 0}\frac{\rho_X(\tau)}{\tau}=0.
\end{equation*}
\end{definition}
Observe that if $X$ is a real uniformly convex and uniformly smooth Banach space, then the modulus of convexity $\delta_X$ is a continuous and strictly increasing function on the whole segment $[0,2]$ (see, for example \cite{BCL1994}).\\
\begin{definition}
A Banach space $X$ possesses the approximation if there exists a directed family of finite dimensional subspaces $X_n$ ordered by inclusion, and a corresponding family of projectors $P_n: X \to X_n,$ such that $||P_n|| = 1$ for all $n \geq 0$ and $\cup_n X_n$ is dense in $X.$
\end{definition} 
Throughout this paper we assume that the so-called normalized duality mapping $J:X\to X^*,$ satisfying the relation
$$ \left\langle x, J\left(x\right)\right\rangle =\left\|x\right\|^2=\left\|J\left(x\right)\right\|^2, \quad \forall x \in X,$$
is single valued. This assumption will be fulfilled if $X$ is smooth.\\
For the sake of simpicity, we will denote norms of both spaces $X$ and $X^*$ by the same symbol $\left\|.\right\|$. The dual product of $f \in X^*$ and $x \in X$ will be denoted by $\left\langle x, f \right\rangle$ or $\left\langle f, x \right\rangle$. Besides, we put $ \mathbb R^+:= (0, \infty),    \mathbb R_*^+:= [0, \infty).$
\begin{definition}\label{def.accretive}
An operator $A:X \to X$ is called
\begin{itemize}
\item  [$1)$] accretive, if 
\begin{equation*}
\left\langle A(x)-A(y), J(x-y)\right\rangle  \ge 0\quad \forall x,y \in X;
\end{equation*}
\item [$2)$] maximal accretive, if it is accretive and its graph is not the right part of the graph of any other accretive operator;
\item [$3)$] $m$-accretive, if it is accretive and $R(A+\alpha I) = X$ for all $\alpha >0,$ where $I$ is the identity operator in $X;$
\item [$4)$] uniformly accretive, if there exists a strictly increasing function $\psi : \mathbb R_*^+  \to \mathbb R_*^+ ,  \psi(0) = 0,$ such that 
\begin {equation}\label{eq:unifaccre}
\left\langle A(x)-A(y), J(x-y)\right\rangle  \ge \psi(||x - y||) \quad \forall x,y \in X;
\end {equation}
\item [$5)$] strongly accretive, if there exists a positive constant $c,$ such that in $(\ref{eq:unifaccre})$, $\psi(t) = c t^2;$
\item [$6)$] inverse strongly accretive, if there exists a positive constant $c,$ such that
\begin{equation*}
\left\langle A(x)-A(y), J(x-y)\right\rangle  \ge c||A(x) - A(y)||^2 \quad \forall x,y \in X.
\end{equation*}
\end{itemize}
\end{definition}
If $X$ is a Hilbert space then $J$ is an identity operator and accretive operators are also called monotone.
\begin{definition}\label{def:quasiISA}
A continuous operator $A$ mapping a Banach space $X$ into itself is called $\varphi$-inverse uniformly accretive $($or  simply, inverse  uniformly  accretive $)$, if there exists a function $\varphi:\mathbb R^+ \times \mathbb R_*^+ \to \mathbb R_*^+$, which is continuous and strictly increasing in the second variable and $\varphi(s,t)=0$ if and only if $t=0$ for every fixed $s>0$, such that
\begin{equation}\label{eq:quasiISA}
\left\langle A(x)-A(y), J(x-y)\right\rangle  \ge \varphi \left(R,\left\|A(x)-A(y))\right\|\right) \forall x, y \in X, \left\|x\right\|,\left\|y\right\|\le R, \forall R >0.
\end{equation}
\end{definition}
\begin{example}\label{eg1}
{\rm Any inverse strongly accretive operator is inverse uniformly accretive, hence is accretive.
Indeed, let A be a $c$-inverse strongly accretive operator. Then A is Lipschitz continuous with the Lipschit constant $c^{-1}$ and the inequality $(\ref{eq:quasiISA})$ holds for the function $\varphi(s,t)= ct^2$}.
\end{example}
\begin{example}\label{eg2}
{\rm Let $T$ be a nonexpansive operator on a uniformly convex and uniformly smooth Banach space $X$. Then $A:=I-T$ is a Lipschitz continuous operator. Moreover, according to Alber \cite{A2000},
\begin{equation*}
\left\langle A(x)-A(y), J(x-y)\right\rangle  \ge L^{-1}R^2\delta_{X}\left(\frac{\left\|A(x)-A(y)\right\|}{4R}\right) \forall x, y \in X, \left\|x\right\|,\left\|y\right\|\le R,
\end{equation*}
where $L\in (1;1.7)$ is the Figiel constant and $\delta_X (\epsilon)$ is the modulus of the convexity of $X$. Observe that $\epsilon:=\frac{\left\|A(x)-A(y)\right\|}{4R} \le 1$ for any $x,y \in X;\left\|x\right\|,\left\|y\right\|\le R$ and inequality $(\ref{eq:quasiISA})$ holds for the function $\varphi(s,t)=L^{-1}s^2\delta_X\left(\frac{t}{4s}\right), s\in \mathbb R^+;   t\in [0;2s]$}.
\end{example}
\begin{example}\label{eg3}
{\rm Now let $X$ in Example 2 be one of the following Banach spaces $L^p,l^p, W^m_p$, where $1<p<\infty$. Then $X$ is uniformly smooth and uniformly convex, and it is well-known that (see \cite{AR2006})
\begin{align*}
&\delta_X(t) \ge \frac{p-1}{16}t^2, \quad 1< p <2; \\
&\delta_X(t) \ge \frac{1}{p2^p}t^p, \quad p\ge 2.
\end{align*}
Thus, for all $x, y \in X, \left\|x\right\|,\left\|y\right\|\le R$, one gets
\begin{align*}
& \left\langle A(x)-A(y), J(x-y)\right\rangle  \ge \frac{p-1}{256L}\left\|A(x)-A(y)\right\|^2, 1<p<2; \\
&\left\langle A(x)-A(y), J(x-y)\right\rangle  \ge \frac{1}{Lp 8^p}\frac{\left\|A(x)-A(y)\right\|^p}{R^{p-2}}, p \ge 2. 
\end{align*}
 So, the operator $A = I - T,$ where $T: l^p \to l^p$ is a nonexpansive operator, is inverse strongly accretive if $1<p<2$, and is inverse uniformly accretive with $\varphi (s,t) = \frac{t^p}{pL8^ps^{p-2}},$ if $p \ge 2$}.
\end{example}
\begin{definition}\label{def.weakcontinous}
An operator, $B:D(B) \subset X \to X$ is called
\begin{itemize}
\item [$1)$] hemicontinuous at a point $x_0 \in D(B)$, if $B(x_0+t_nh) \rightharpoonup x_0$ as $t_n \to 0$ for any vector $h$ such that $x_0+t_nh \in D(B)$ and $ 0 \leq t_n \leq t(x_0)$;
\item [$2)$] weakly continuous at $x_0 \in D(B)$, if $D(B)\ni x\rightharpoonup x_0$ implies that $B(x) \rightharpoonup B(x_0)$. 
\end{itemize}
\end{definition}
If $B$ is hemicontinuous (weakly continuous) at every point of $D(B)$, then $B$ is said to be hemicontinuous (weakly continuous), respectively.\\
For regularizing accretive operator equations one needs the following fact \cite{AR2006}.
\begin{lemma}\label{AlbRya2006}
Suppose that the Banach space $X$ possesses the approximation, $A: X \to X$ is a hemicontinuous accretive operator with $D(A) = X,$ and the normalized duality mapping $J: X \to X^*$ is sequentially weakly continuous and continuous. Then the problem 
\begin {equation}\label {eq:lem1}
A(x) + \alpha x = y,
\end{equation} 
where $\alpha$ is a fixed positive parameter and $y \in X$, is well-posed.
\end{lemma}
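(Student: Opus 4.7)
The plan is to verify the three requirements of well-posedness — existence, uniqueness, and Lipschitz continuous dependence on $y$ — for the equation $A(x) + \alpha x = y$, exploiting the fact that $A + \alpha I$ inherits strong accretivity with constant $\alpha$ from the accretivity of $A$.

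Uniqueness and stability come together and are immediate. If $x_i$ solves $A(x_i) + \alpha x_i = y_i$ for $i = 1,2$, pairing the difference with $J(x_1 - x_2)$ gives
\[
\langle A(x_1) - A(x_2), J(x_1 - x_2)\rangle + \alpha \|x_1 - x_2\|^2 = \langle y_1 - y_2, J(x_1 - x_2)\rangle,
\]
and dropping the non-negative accretivity term yields $\|x_1 - x_2\| \leq \alpha^{-1}\|y_1 - y_2\|$. Setting $y_1 = y_2$ gives uniqueness.

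For existence I would set up a Galerkin scheme on the family $\{X_n\}$ provided by the approximation property, seeking $x_n \in X_n$ that satisfies the projected equation $P_n(A(x_n) + \alpha x_n - y) = 0$. Testing with $J(x_n)$ and using accretivity of $A$ at the origin produces the uniform a priori bound $\|x_n\| \leq \alpha^{-1}(\|A(0)\| + \|y\|)$, while a finite-dimensional Brouwer-type argument (hemicontinuity and accretivity of $A$ imply demicontinuity on each $X_n$, so the standard coercivity-plus-continuity existence theorem applies) delivers solvability of each Galerkin equation. The bounded sequence $\{x_n\}$ then admits a weakly convergent subsequence $x_{n_k} \rightharpoonup x^*$.

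The main obstacle is identifying the weak limit as a solution, which I would handle by a Minty-type monotonicity argument. Writing the accretivity inequality $\langle A(z) - A(x_n), J(z - x_n)\rangle \geq 0$ for arbitrary $z \in \cup_m X_m$, inserting the Galerkin equation and passing to the limit along $n_k$ using the sequential weak continuity and continuity of $J$, one obtains
\[
\langle A(z) - y + \alpha x^*, J(z - x^*)\rangle \geq 0 \quad \forall\, z \in \textstyle\bigcup_m X_m,
\]
which extends to all $z \in X$ by density. Taking $z = x^* + t h$ for arbitrary $h \in X$, dividing by $t > 0$, letting $t \to 0^+$ and invoking the hemicontinuity of $A$ yields $\langle A(x^*) + \alpha x^* - y, J(h)\rangle = 0$ for every $h$, hence $A(x^*) + \alpha x^* = y$. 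It is precisely at this limit step that the approximation property, the weak continuity of $J$, and the hemicontinuity of $A$ all enter in an essential way.
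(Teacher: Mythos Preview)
Your uniqueness/stability argument is exactly what the paper records: it writes the same inequality $\|x_{\alpha,1}-x_{\alpha,2}\|\le \alpha^{-1}\|y_1-y_2\|$ and deduces continuous dependence from it. For existence, however, the paper does not give a proof at all --- it simply cites \cite{AR2006}. So you are doing strictly more than the paper does.

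Your Galerkin--Minty sketch is the standard route to such results and is essentially the argument in the cited reference. A couple of points deserve more care. First, ``testing the projected equation with $J(x_n)$'' requires $P_n^* J(x_n)=J(x_n)$ for $x_n\in X_n$; this follows from $\|P_n\|=1$ together with smoothness of $X$ (uniqueness of the norming functional), and is worth stating. Second, in the limit passage you pair a weakly convergent sequence $x_{n_k}$ with the weakly convergent $J(z-x_{n_k})$, which is not automatic; the clean way is to rewrite the inequality as
\[
\langle A(z)+\alpha z - y,\, J(z-x_n)\rangle \ \ge\ \alpha\|z-x_n\|^2,
\]
pass to the limit on the left using sequential weak continuity of $J$, and use weak lower semicontinuity of the norm on the right to reach $\langle A(z)+\alpha x^* - y,\, J(z-x^*)\rangle\ge 0$. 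With these two clarifications your existence argument is sound, and the Minty step with $z=x^*+th$ and hemicontinuity finishes as you indicate.
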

The unique solvability of $(\ref{eq:lem1})$ is established in \cite{AR2006}.  The continuous dependence of the solution $x_\alpha$ of $(\ref{eq:lem1})$ on the right-hand side $y$ follows from the inequality $||x_{\alpha, 1}-x_{\alpha, 2}|| \leq \frac{||y_1-y_2||}{\alpha},$ where $x_{\alpha, i}$ are the unique solution of  $(\ref{eq:lem1})$ with respect to the right-hand side $ y = y_i,  i =1,2$.\\

The next five lemmas will be used in Section 2 for establishing the convergence of implicit and explicit parallel iterative regularization methods.
\begin{lemma}\label{lem.ineqJ*}
{\rm \cite{A2007}}  Let $X$ be a real uniformly smooth Banach space. Then for any $x,y \in X $ such that $\left\|x\right\|\le R$, $\left\|y\right\| \le R$, the following inequality holds:
\begin{equation*}\label{eq:ineqJ*}
\left\|J(x)-J(y)\right\| \le 8R h_{X}\left(\frac{16L\left\|x-y\right\| }{R}\right),
\end{equation*}
where $L$ is Figiel constant, $(1<L<1.7)$.
\end{lemma}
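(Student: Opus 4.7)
The plan is to bound $\|J(x) - J(y)\|$ by dualizing to a supremum over the unit ball of $X$ and then controlling the resulting scalar quantity with the modulus of smoothness $\rho_X$.

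First, I would use the identity $\|J(x) - J(y)\| = \sup_{\|h\|=1}\langle h, J(x) - J(y)\rangle$ to reduce the question to estimating a scalar quantity $\langle h, J(x) - J(y)\rangle$ uniformly in unit vectors $h$. Since $X$ is uniformly smooth, the function $\tfrac12\|\cdot\|^2$ is Fréchet differentiable with derivative $J$, and the quadratic smoothness inequality
\begin{equation*}
\bigl|\tfrac12\|u+th\|^2 - \tfrac12\|u\|^2 - t\langle h, J(u)\rangle\bigr| \;\leq\; C\,\|u\|^2\,\rho_X\!\left(\tfrac{t}{\|u\|}\right)
\end{equation*}
holds with a universal $C$. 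Applying this at $u = x$ and $u = y$ and subtracting pairs of such expansions, the gradient-difference term $t\langle h, J(x)-J(y)\rangle$ is isolated up to two $\rho_X$ error terms, each of order $R^2 \rho_X(t/R)$.

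Next, the right-hand side must be reorganized so that $\rho_X$ is evaluated at the scale $\|x-y\|/R$ rather than at the auxiliary $t$. This is where the Figiel constant $L$ enters: Figiel's inequality provides universal control (with constant $L \in (1,1.7)$) of the modulus of the dual space $X^*$ — where $J(x), J(y)$ live — in terms of $\rho_X$. Choosing the perturbation parameter $t$ proportional to $\|x-y\|$ (this is the optimal scale balancing the error terms with the linear term) and invoking Figiel's inequality converts the raw $\rho_X(t/R)$ bound into the claimed $R\,h_X(16L\|x-y\|/R)$ expression after dividing by $t$.

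The main obstacle is purely quantitative: tracking the numerical constants ($8$ in front, $16L$ inside the argument) through the chain consisting of the Fréchet expansion, the supremum duality, and Figiel's comparison. Some additional care is needed because the hypothesis is $\|x\|, \|y\| \le R$ rather than $\|x\|=\|y\|=R$; this can be handled either by homogenizing to the sphere of radius $R$ (at the cost of checking monotonicity of $\rho_X(\cdot)/\cdot$, which follows from convexity of $\rho_X$) or by noting that $J$ is positively homogeneous of degree one so both sides scale compatibly. Once the constants are in order, the inequality follows.
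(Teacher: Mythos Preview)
The paper does not prove this lemma; it is quoted verbatim from Alber \cite{A2007} and used as a black box, so there is no in-paper argument to compare against.

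That said, your sketch has a genuine gap at the ``subtracting pairs of expansions'' step. Carrying it out explicitly: writing the smoothness expansion at $u=x$ and at $u=y$ with the same perturbation $th$ and subtracting yields
\[
t\langle h, J(x)-J(y)\rangle \;=\; \Bigl[\tfrac12\|x+th\|^2-\tfrac12\|y+th\|^2\Bigr]-\Bigl[\tfrac12\|x\|^2-\tfrac12\|y\|^2\Bigr]\;+\;O\bigl(R^2\rho_X(t/R)\bigr).
\]
The two bracketed terms are \emph{not} $\rho_X$-error terms; each is only bounded by $(R+t)\|x-y\|$ via the reverse triangle inequality, and they do not cancel. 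Dividing by $t$ and optimizing at $t\sim\|x-y\|$ therefore leaves a residual of order $R$, not of order $R\,h_X(\cdot)$, so the argument as written only recovers the trivial bound $\|J(x)-J(y)\|\le 2R$. The point is that testing against an \emph{arbitrary} unit vector $h$ loses the special alignment between $x-y$ and $J(x)-J(y)$ that the duality map provides; if instead you take $h$ in the direction $x-y$ you do get a $\rho_X$ bound, but only on $\langle x-y,J(x)-J(y)\rangle$ (this is essentially Lemma~\ref{lem.geometry2}), not on the full norm $\|J(x)-J(y)\|$.

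Alber's actual route is different: it exploits the fact that $X^*$ is uniformly convex (since $X$ is uniformly smooth) and that $J^{-1}=J_{X^*}$, obtaining a lower bound for $\langle x-y,J(x)-J(y)\rangle$ in terms of $\delta_{X^*}(\|J(x)-J(y)\|)$; combining this with the upper bound of Lemma~\ref{lem.geometry2} and the Figiel--Lindenstrauss duality relation between $\delta_{X^*}$ and $\rho_X$ (which is where the constant $L$ genuinely enters) then yields the stated inequality. Your description of Figiel's role is in the right spirit, but the missing ingredient is this two-sided sandwich on $\langle x-y,J(x)-J(y)\rangle$ rather than a direct first-order expansion against a generic $h$.
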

\begin{lemma}\label{lem.ineqconvex}{\rm \cite{AR2006}}  If $X$ is a real uniformly smooth Banach space, then the inequality
\begin{align*}\label{eq:bdt.l}
\left\|x\right\|^2 &\le \left\|y\right\|^2 +2\left\langle x-y, J(x)\right\rangle\\
&\le \left\|y\right\|^2 +2\left\langle x-y, J(y)\right\rangle+2\left\langle x-y, J(x)-J(y)\right\rangle \notag
\end{align*}
holds for every $x,y \in X$.
\end{lemma}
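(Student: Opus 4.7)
The plan is to recognise that the two-part display really contains only one nontrivial inequality. The second ``inequality'' in the chain is in fact an exact identity, because linearity of the duality pairing in its second argument gives $\langle x-y, J(x)\rangle = \langle x-y, J(y)\rangle + \langle x-y, J(x) - J(y)\rangle$. Hence the only thing to verify is
\[
\|x\|^2 \;\le\; \|y\|^2 + 2\langle x-y, J(x)\rangle.
\]

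I would prove this by direct algebra from the defining properties of the normalized duality mapping, namely $\langle x, J(x)\rangle = \|x\|^2$ and $\|J(x)\| = \|x\|$. First, rewrite
\[
2\langle x-y, J(x)\rangle \;=\; 2\langle x, J(x)\rangle - 2\langle y, J(x)\rangle \;=\; 2\|x\|^2 - 2\langle y, J(x)\rangle.
\]
Next, apply the duality bound $\langle y, J(x)\rangle \le \|y\|\cdot\|J(x)\| = \|y\|\,\|x\|$ together with the elementary AM--GM inequality $2\|x\|\,\|y\| \le \|x\|^2 + \|y\|^2$ to get
\[
\|y\|^2 + 2\langle x-y, J(x)\rangle \;\ge\; \|y\|^2 + 2\|x\|^2 - \|x\|^2 - \|y\|^2 \;=\; \|x\|^2,
\]
which is precisely the asserted inequality.

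There is really no substantive obstacle here: uniform smoothness of $X$ is not used beyond ensuring that $J$ is single valued, which is assumed throughout the paper. A more structural alternative would be to invoke convexity of $\tfrac{1}{2}\|\cdot\|^2$, whose G\^{a}teaux derivative in a smooth space equals $J$, and to read off the inequality as the subgradient relation at $x$ applied to $y$; the elementary algebraic route above is however shorter and entirely self-contained.
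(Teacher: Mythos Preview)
Your argument is correct. The second line of the display is indeed an identity by linearity of the pairing, and your AM--GM manipulation establishes the first inequality cleanly; you are also right that uniform smoothness plays no role beyond making $J$ single valued.

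As for comparison with the paper: there is nothing to compare, since the paper does not prove this lemma at all but simply quotes it from Alber--Ryazantseva \cite{AR2006}. Your write-up therefore supplies a short self-contained proof where the paper offers only a citation, which is a net gain. The alternative you mention---reading the inequality as the subgradient inequality for the convex functional $\tfrac12\|\cdot\|^2$ with G\^ateaux derivative $J$---is essentially how the result is usually derived in the cited monograph, so your direct algebraic route is the more elementary of the two.
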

\begin{lemma} {\rm \cite{AR2006}} \label{lem.geometry1}
Let X be a uniformly smooth Banach space. Then for $x,y\in X$
\begin{equation*}\label{eq:bd5}
\left\langle x-y, J(x)-J(y) \right\rangle \le 8\left\|x-y\right\|^2 +C\left(\left\|x\right\|, \left\|y\right\|\right) \rho_X (\left\|x-y\right\|),
\end{equation*}
where $C\left(\left\|x\right\|, \left\|y\right\|\right) \le 4$ max $\left\{2L, \left\|x\right\|+ \left\|y\right\|\right\}.$
\end{lemma}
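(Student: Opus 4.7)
The plan is to bound the bilinear quantity $\left\langle x-y, J(x)-J(y)\right\rangle$ by combining a Cauchy--Schwarz-type step with the estimate from Lemma \ref{lem.ineqJ*}, and then recasting the resulting bound so as to separate a "Hilbertian" quadratic remainder $8\|x-y\|^2$ from a genuinely "smooth" remainder $C\,\rho_X(\|x-y\|)$. A parallel strategy, probably closer to the Alber--Ryazantseva derivation, is to work from a second-order smoothness expansion of $\tfrac12\|\cdot\|^2$ and let symmetrization do the separation.

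First I would set $R := \max\{\|x\|,\|y\|\}$ and apply Cauchy--Schwarz to obtain $\left\langle x-y, J(x)-J(y)\right\rangle \le \|x-y\|\,\|J(x)-J(y)\|$. Lemma \ref{lem.ineqJ*} then yields $\|J(x)-J(y)\| \le 8R\,h_X(16L\|x-y\|/R)$, so, using $h_X(\tau)=\rho_X(\tau)/\tau$, the right-hand side simplifies to $\tfrac{R^2}{2L}\rho_X\!\left(16L\|x-y\|/R\right)$. The task is then to re-express this quantity in terms of $\rho_X(\|x-y\|)$ and $\|x-y\|^2$, absorbing the dilation factor $16L/R$ inside the argument of $\rho_X$.

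To that end I would exploit the fact that $\rho_X$ is convex with $\rho_X(0)=0$, so that $\rho_X(\lambda\tau) \le \max\{\lambda,\lambda^2\}\rho_X(\tau)$ for $\lambda>0$, and split into the two regimes $16L \le R$ and $16L > R$. In the first regime the dilation contracts the argument, and $R^2\rho_X(16L\|x-y\|/R)$ is bounded by a multiple of $\rho_X(\|x-y\|)$ with prefactor fitting inside $C \le 4\max\{2L,\|x\|+\|y\|\}$. In the second regime I would use the elementary bound $\rho_X(\tau)\le\tau^2$, valid in any uniformly smooth space, to convert $R^2\rho_X(16L\|x-y\|/R)$ into a purely quadratic contribution absorbable into the $8\|x-y\|^2$ term. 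Alternatively, starting from a sharper smoothness inequality $\|u+v\|^2 \le \|u\|^2 + 2\langle v,J(u)\rangle + D(\|u\|,\|v\|)\rho_X(\|v\|)$ and applying it with $(u,v)=(x,y-x)$ and, symmetrically, $(u,v)=(y,x-y)$, the $\|u\|^2$ terms cancel upon addition and one obtains $2\left\langle x-y, J(x)-J(y)\right\rangle \le 2D\,\rho_X(\|x-y\|)$ directly; the $8\|x-y\|^2$ summand emerges as the residual quadratic error when the smoothness expansion is only approximate.

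The main obstacle will be pinning down the numerical constants — getting the coefficient $8$ in front of $\|x-y\|^2$ and fitting the remaining factor inside $4\max\{2L,\|x\|+\|y\|\}$ — since a naive case split easily loses a multiplicative factor of order $R/L$. The $L$-contribution to $C$ reflects the Figiel constant governing the duality between $\rho_X$ and the modulus of convexity of $X^*$, and enters through Lemma \ref{lem.ineqJ*}, while the $\|x\|+\|y\|$ contribution reflects the macroscopic growth $\|J(u)\|=\|u\|$. Reconciling these two sources of linear dependence without inflating the quadratic coefficient beyond $8$ will require careful bookkeeping in either the case-split or the smoothness expansion, and is the real technical heart of the estimate.
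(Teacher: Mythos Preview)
The paper does not supply its own proof of this lemma: it is quoted verbatim from Alber--Ryazantseva \cite{AR2006} and used as a black box. So there is no in-paper argument to compare your sketch against.

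That said, your sketch has a genuine gap. In the second regime ($16L>R$) you invoke ``the elementary bound $\rho_X(\tau)\le\tau^2$, valid in any uniformly smooth space.'' This is false: the universal inequality goes the other way, $\rho_X(\tau)\ge\rho_H(\tau)\ge\widehat{c}\,\tau^2$ (cf.\ inequality \eqref{eq:c28} in the paper), while the only universal upper bound is $\rho_X(\tau)\le\tau$. For instance, in $L^p$ with $1<p<2$ one has $\rho_X(\tau)\asymp\tau^p$, which dominates $\tau^2$ near zero. Consequently your mechanism for converting the dilated term $\tfrac{R^2}{2L}\rho_X(16L\|x-y\|/R)$ into the purely quadratic piece $8\|x-y\|^2$ breaks down precisely in the range where you need it. The honest constant-tracking difficulty you flag at the end is therefore not just bookkeeping: with this route it cannot be closed, because in the regime $R$ small you have no quadratic control on $\rho_X$ at all.

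Your alternative suggestion --- symmetrizing a second-order smoothness expansion of $\tfrac12\|\cdot\|^2$ --- is much closer to how such estimates are actually derived in \cite{AR2006}, and is the route to pursue if you want a self-contained proof. The $8\|x-y\|^2$ term there does not arise from a case split but from the structure of the Figiel-type estimate controlling $\|J(x)-J(y)\|$ (or, dually, the modulus of convexity of $X^*$), in which a quadratic contribution appears alongside the $\rho_X$ term before any dilation is introduced.
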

\begin{lemma}{\rm \cite{AR2006}} \label{lem.geometry2}
In a uniformly smooth Banach space $X$, for $x,y \in X,$ 
\begin{equation*}\label{eq:bd6}
\left\langle x-y, J(x)-J(y) \right\rangle \le R^2 (\left\|x\right\|, \left\|y\right\|) \rho_X \left(\frac{4\left\|x-y\right\|}{R(\left\|x\right\|, \left\|y\right\|)}\right),
\end{equation*}
where $R(\left\|x\right\|, \left\|y\right\|)=\sqrt{2^{-1}(\left\|x\right\|^2+ \left\|y\right\|^2)}$.\\
If $\left\|x\right\|\le R, \left\|y\right\| \le R$, then 
\begin{equation*}\label{eq:bd6}
\left\langle x-y, J(x)-J(y) \right\rangle \le 2LR^2 \rho_X \left(\frac{4\left\|x-y\right\|}{R}\right).
\end{equation*}
\end{lemma}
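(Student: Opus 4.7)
The inequality is stated as a result from Alber--Ryazantseva, so my plan is to sketch how their argument goes rather than produce an independent proof. It splits cleanly into two stages: the main (first) inequality, and its corollary, the second inequality.

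Reduction and normalization. I begin with the first inequality. If $x=y$ both sides vanish; otherwise write $r := R(\|x\|,\|y\|) > 0$ and put $\tilde x = x/r$, $\tilde y = y/r$. Because $J$ is positively homogeneous of degree one, $\langle x-y, J(x)-J(y)\rangle$ scales by $r^2$, $\|x-y\|$ scales by $r$, and $R(\|\tilde x\|, \|\tilde y\|) = 1$. The first inequality is therefore equivalent to its normalized form: for every $x,y$ with $\|x\|^2 + \|y\|^2 = 2$ (so in particular $\|x+y\| \le 2$), one has $\langle x-y, J(x)-J(y)\rangle \le \rho_X(4\|x-y\|)$.

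Proof of the normalized bound. Rewriting $\langle x-y, J(x)-J(y)\rangle = 2 - S(x,y)$ with $S(x,y) := \langle x,J(y)\rangle + \langle y,J(x)\rangle$, the target becomes $S(x,y) \ge 2 - \rho_X(4\|x-y\|)$. I would combine the definitional inequality of the modulus of smoothness, $\|u+v\| + \|u-v\| \le 2\|u\|\bigl(1 + \rho_X(\|v\|/\|u\|)\bigr)$, applied with $u = (x+y)/2$ and $v = (x-y)/2$ so that $u+v = x$ and $u-v = y$, with the support-functional identities $\|z\|^2 = \langle z, J(z)\rangle$ and the convexity inequalities $\|x+y\|^2 \ge \|x\|^2 + 2\langle y, J(x)\rangle$ (and its $x \leftrightarrow y$ counterpart). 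These two-sided estimates on $\|x+y\|^2$ convert the purely metric bound from the definition of $\rho_X$ into a lower bound on $S(x,y)$. The factor $4$ inside $\rho_X(4\|x-y\|)$ enters by combining the factor $2$ from $v = (x-y)/2$ with the bound $\|x+y\| \le 2$ used to replace $\|v\|/\|u\| = \|x-y\|/\|x+y\|$ by $\|x-y\|/2$, after which the monotonicity of $\rho_X$ absorbs the remaining slack.

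Derivation of the second inequality. Now assume $\|x\|, \|y\| \le R$ and set $r := R(\|x\|,\|y\|) \le R$. The first inequality yields $\langle x-y, J(x)-J(y)\rangle \le r^2 \rho_X(4\|x-y\|/r)$. To transfer $r$ to $R$ I invoke the Figiel growth estimate valid in every uniformly smooth space, $\rho_X(\mu\tau) \le 2L\mu^2 \rho_X(\tau)$ for $\mu \ge 1$, where $L \in (1, 1.7)$ is the Figiel constant; applying this with $\mu = R/r \ge 1$ and $\tau = 4\|x-y\|/R$ gives $r^2\rho_X(4\|x-y\|/r) \le 2LR^2\rho_X(4\|x-y\|/R)$, which is the second inequality.

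Main obstacle. The delicate step is the duality conversion in the normalized case: translating the metric inequality from the definition of $\rho_X$ (applied to $u=(x+y)/2$, $v=(x-y)/2$) into the lower estimate on the duality-paired sum $S(x,y)$. In a general Banach space one lacks the clean parallelogram identity of the Hilbert setting and must instead juggle the two-sided estimates on $\|x\pm y\|^2$ through the support functional $J(x+y)$, monotonicity of $J$, and the normalization $\|x\|^2+\|y\|^2 = 2$, while tracking the numerical constant $4$ carefully through the rescaling.
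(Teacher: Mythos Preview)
The paper does not prove this lemma at all; it is quoted from Alber--Ryazantseva \cite{AR2006} and used as a black box, so there is no in-paper argument to compare against. Your outline is in the spirit of the Alber--Ryazantseva treatment: the homogeneity reduction to the case $R(\|x\|,\|y\|)=1$ is correct and standard, and the passage from the first inequality to the second via a Figiel-type scaling bound $\rho_X(\mu\tau)\le 2L\mu^{2}\rho_X(\tau)$ for $\mu\ge 1$ is indeed the mechanism by which the constant $2L$ enters.

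The genuine gap is the ``normalized bound'' paragraph, which you yourself flag as the main obstacle. You list ingredients --- the defining inequality of $\rho_X$ applied to $u=(x+y)/2$, $v=(x-y)/2$, and the subdifferential inequalities $\|x+y\|^2\ge\|x\|^2+2\langle y,J(x)\rangle$ --- but do not actually execute the chain that yields $S(x,y)\ge 2-\rho_X(4\|x-y\|)$. In fact the convexity inequalities you cite, when added over $x\leftrightarrow y$, give $S(x,y)\le\|x+y\|^2-1$, which is the wrong direction; getting a \emph{lower} bound on $S(x,y)$ from smoothness requires a different combination (typically via the second-order smoothness estimate $\tfrac12\|x+h\|^2\le\tfrac12\|x\|^2+\langle h,J(x)\rangle+\text{remainder controlled by }\rho_X$, or an integration along the segment $[y,x]$). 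So the proposal is a credible roadmap, but the decisive step is not yet a proof.
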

\begin{lemma} \label{lem.sequence}{\rm \cite{AR2006, Xu2004}}
Let $\left\{\lambda_n\right\}$ and $\left\{p_n\right\}$ be sequences of nonnegative numbers, $\left\{b_n\right\}$ be a sequence of positive numbers, satisfying the inequalities
$$ \lambda_{n+1} \le \left(1-p_n\right)\lambda_n +b_n, \quad \forall n\ge 0, $$
where $p_n \in \left(0;1\right),\frac{b_n}{p_n}\to 0 \left(n\to +\infty \right)$ and  $\sum_{i=1}^\infty p_n =+\infty$. Then $\lambda_n\to 0\left(n\to +\infty\right)$.
\end{lemma}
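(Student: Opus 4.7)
The plan is to unroll the one-step recursion into a closed form and then control each resulting piece using the two hypotheses, in the spirit of Xu's classical treatment. First, by iterating $\lambda_{n+1} \le (1-p_n)\lambda_n + b_n$ starting from an arbitrary index $N$, I would obtain
\[
\lambda_{n+1} \le \lambda_N \prod_{k=N}^{n}(1-p_k) + \sum_{k=N}^{n} b_k \prod_{j=k+1}^{n}(1-p_j),
\]
with the convention that an empty product equals $1$. The two terms will be handled separately: the ``initial'' term using $\sum p_n = +\infty$, and the ``driving'' term using $b_n/p_n \to 0$.

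Fix $\varepsilon > 0$. Since $b_n/p_n \to 0$, I would choose $N$ so large that $b_n \le \varepsilon p_n$ for every $n \ge N$. The driving sum is then bounded by $\varepsilon \sum_{k=N}^n p_k \prod_{j=k+1}^n (1-p_j)$. The key observation is the telescoping identity
\[
p_k \prod_{j=k+1}^{n}(1-p_j) \;=\; \prod_{j=k+1}^{n}(1-p_j) \;-\; \prod_{j=k}^{n}(1-p_j),
\]
which collapses the sum to $1 - \prod_{j=N}^{n}(1-p_j) \le 1$. Hence the driving term is at most $\varepsilon$, uniformly in $n$.

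For the initial term, the elementary inequality $1-x \le e^{-x}$ (valid for $x\in(0,1)$, so applicable since $p_k \in (0,1)$) gives
\[
\prod_{k=N}^{n}(1-p_k) \;\le\; \exp\!\left(-\sum_{k=N}^{n}p_k\right) \;\longrightarrow\; 0
\]
as $n\to\infty$, because $\sum p_k = +\infty$. Putting the two estimates together yields $\limsup_{n\to\infty}\lambda_{n+1} \le \varepsilon$, and since $\varepsilon > 0$ was arbitrary and $\lambda_n \ge 0$, we conclude $\lambda_n \to 0$.

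I do not foresee a serious obstacle: once the recursion is unrolled the argument is a clean $\varepsilon$-estimate. The only mildly subtle point is recognizing the telescoping identity for $p_k \prod(1-p_j)$; without it one would have to argue indirectly, for instance by choosing $N_1 \ge N$ with $\prod_{k=N}^{N_1}(1-p_k) \le \varepsilon/\lambda_N$ and then proving by induction that $\lambda_n \le 2\varepsilon$ for all $n \ge N_1$, using that the recursion cannot inflate a value already below $\varepsilon$ by more than $b_n \le \varepsilon p_n$.
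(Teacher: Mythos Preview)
Your argument is correct and is essentially the standard proof of this lemma as found in the cited references \cite{AR2006, Xu2004}: unroll the recursion, use the telescoping identity to bound the inhomogeneous sum by $\varepsilon$, and kill the homogeneous part via $\prod(1-p_k)\le \exp(-\sum p_k)\to 0$. The paper itself does not supply a proof of this statement; it is quoted as a known result from the literature, so there is no in-paper argument to compare against beyond noting that your proof matches the one in Xu's work.
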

In Section 3, when dealing with a parallel Newton-type regularization method, we need some more results.
\begin{lemma} {\rm \cite{Wang2008}} \label{lem.inequ.phu}
Suppose $A:D(A)=X\to X$ is  a continuously Fr$\acute{e}$chet differentiable accretive operator and let $L:=A^{'} (h), h \in X$, and $\alpha$ be a real positive number. Then
\begin{equation*}\label{eq: sa6}
\left\|\left(\alpha I +L\right)^{-1}\right\| \le \frac{1}{\alpha};  \left\|\left(\alpha I +L\right)^{-1}L\right\| \le 2.
\end{equation*}
\end{lemma}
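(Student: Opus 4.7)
The plan is to reduce the two estimates to standard facts about bounded linear accretive operators. First I would verify that $L := A'(h)$ is itself accretive. Fix $v \in X$ and $t > 0$; accretivity of $A$ applied at the pair $(h+tv,h)$, together with the homogeneity $J(tv) = tJ(v)$ for $t > 0$, gives
\begin{equation*}
\Big\langle \frac{A(h+tv)-A(h)}{t},\, J(v)\Big\rangle \ge 0.
\end{equation*}
Letting $t \to 0^+$ and invoking Fréchet differentiability yields $\langle Lv, J(v)\rangle \ge 0$ for all $v \in X$. Since $L$ arises as a Fréchet derivative, it is automatically bounded and linear.

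Second, for any $w \in X$ and $u \in X$ with $(\alpha I + L)u = w$, pairing with $J(u)$ and using $\langle Lu, J(u)\rangle \ge 0$ produces $\alpha\|u\|^2 \le \langle w, J(u)\rangle \le \|w\|\|u\|$, hence the a priori bound
\begin{equation*}
\|(\alpha I + L)u\| \ge \alpha \|u\|.
\end{equation*}
This immediately delivers injectivity and closed range of $\alpha I + L$, and, once surjectivity is established, the first estimate $\|(\alpha I + L)^{-1}\| \le 1/\alpha$.

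Third, I would argue surjectivity by continuous extension. For any $\alpha > \|L\|$, $\alpha I + L = \alpha(I + L/\alpha)$ is invertible by the Neumann series since $\|L/\alpha\| < 1$; combined with the previous step, $\|(\alpha I + L)^{-1}\| \le 1/\alpha$ for all such $\alpha$. Now fix an arbitrary target $\alpha > 0$ and pick $\alpha_0 > \max(\|L\|,\alpha/2)$. For this $\alpha_0$ the factorization
\begin{equation*}
\alpha I + L = (\alpha_0 I + L)\bigl[I + (\alpha - \alpha_0)(\alpha_0 I + L)^{-1}\bigr]
\end{equation*}
exhibits $\alpha I + L$ as a composition of invertible operators, because $|\alpha - \alpha_0|\cdot\|(\alpha_0 I + L)^{-1}\| \le |\alpha - \alpha_0|/\alpha_0 < 1$ allows the bracketed factor to be inverted by Neumann series. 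This is the one step where genuine care is needed; the rest is routine bookkeeping.

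Finally, the second bound falls out algebraically from the identity
\begin{equation*}
(\alpha I + L)^{-1}L = (\alpha I + L)^{-1}\bigl[(\alpha I + L) - \alpha I\bigr] = I - \alpha(\alpha I + L)^{-1},
\end{equation*}
which gives $\|(\alpha I + L)^{-1}L\| \le 1 + \alpha \cdot (1/\alpha) = 2$, completing the proof.
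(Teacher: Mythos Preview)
Your argument is correct. The paper itself does not supply a proof of this lemma; it simply quotes the result from \cite{Wang2008}. So there is no ``paper's own proof'' to compare against here---your write-up is a self-contained substitute.

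A couple of minor remarks on presentation. In the first step you silently use that the normalized duality map is single-valued (so that $J(tv)=tJ(v)$ is an equality rather than a set inclusion); the paper imposes this as a standing hypothesis, so you are fine, but it is worth flagging. In the surjectivity step your choice $\alpha_0>\max(\|L\|,\alpha/2)$ is exactly what is needed: the Neumann series gives invertibility at $\alpha_0$, the a~priori bound from step~2 upgrades the norm estimate to $\|(\alpha_0 I+L)^{-1}\|\le 1/\alpha_0$, and then $|\alpha-\alpha_0|/\alpha_0<1$ lets the perturbation identity close in a single step. The final identity $(\alpha I+L)^{-1}L = I-\alpha(\alpha I+L)^{-1}$ is the standard trick and gives the bound $2$ immediately.
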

\begin{lemma} {\rm \cite{BNS1997, AD2013}} \label{lem.sequence2}
Let $\left\{\omega_n\right\}$ be a sequence of nonnegative numbers satisfying the relations
\begin{equation*}\label{eq:lemma2}
\omega_{n+1}\le a+b\omega_n+c\omega_n^2, \quad n\ge 0,
\end{equation*}
for some $a,b,c>0$.\\ Let $M_{+}:=(1-b+\sqrt{(1-b)^2-4ac})/2c$, $M_{-}:=(1-b-\sqrt{(1-b)^2-4ac})/2c$. If $b+2\sqrt{ac}<1$ and $\omega_0 \le M_{+}$, then $\omega_n \le l:=$max$\left\{\omega_0, M_{-}\right\}$ for all $n\ge 0$.
\end{lemma}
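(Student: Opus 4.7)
My plan is to study the auxiliary quadratic
\[
f(t) := ct^2 + (b-1)t + a
\]
whose roots are exactly $M_-$ and $M_+$, and the associated ``one-step map''
\[
g(t) := a + bt + ct^2 = t + f(t).
\]
First I would verify the structural facts: the assumption $b+2\sqrt{ac}<1$ gives both $1-b>0$ and $(1-b)^2-4ac>0$, hence the two roots $M_\pm$ are real; Vieta's formulas ($M_-M_+=a/c>0$, $M_-+M_+=(1-b)/c>0$) then show that $0<M_-\le M_+$. From the sign of the leading coefficient it follows that $f(t)\le 0$ precisely on $[M_-, M_+]$ and $f(t)>0$ for $t\in[0,M_-)$. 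Also, $g'(t)=b+2ct>0$ on $[0,\infty)$, so $g$ is strictly increasing there, and $g(M_-)=M_-$, $g(M_+)=M_+$ because $f$ vanishes at these points.

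Next I would prove by induction that $\omega_n\le l=\max\{\omega_0,M_-\}$ for all $n\ge 0$, splitting into the two regimes produced by the analysis above. The base case $n=0$ is immediate. For the inductive step, assume $\omega_n\le l$. If $\omega_n\le M_-$, then the monotonicity of $g$ yields
\[
\omega_{n+1}\le g(\omega_n)\le g(M_-)=M_-\le l.
\]
If instead $M_-<\omega_n\le l\le M_+$ (this case only arises when $\omega_0>M_-$, so $l=\omega_0\le M_+$), then $f(\omega_n)\le 0$, hence
\[
\omega_{n+1}\le g(\omega_n)=\omega_n+f(\omega_n)\le \omega_n\le l.
\]
In either case the induction goes through, and the claimed bound follows.

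The only delicate point, and what I expect to be the main obstacle, is making sure the induction does not escape the interval $[0,M_+]$ where the two-regime analysis is valid: one has to notice that the bound $\omega_0\le M_+$ together with the monotonicity/fixed-point properties of $g$ forces every iterate to remain in $[0,M_+]$, so that the dichotomy ``$\omega_n\le M_-$ or $M_-<\omega_n\le M_+$'' really exhausts the possibilities at each step. Once this is observed, no further estimate is required.
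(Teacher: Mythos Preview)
Your argument is correct. The paper does not actually prove this lemma: it is merely quoted from \cite{BNS1997, AD2013} and used as a tool in Section~3, so there is no ``paper's own proof'' to compare against. What you have written is a clean, self-contained proof that would be perfectly acceptable in place of the citation. The key observations---that $M_{\pm}$ are the (positive) fixed points of the increasing map $g(t)=a+bt+ct^2$, that $g(t)\le t$ on $[M_-,M_+]$, and that $g$ maps $[0,M_-]$ into itself---are exactly the structural facts one needs, and your two-case induction handles them correctly. The point you flag as ``delicate'' is in fact already fully covered by your dichotomy: since $l=\max\{\omega_0,M_-\}\le M_+$ (using $\omega_0\le M_+$ and $M_-\le M_+$), the inductive hypothesis $\omega_n\le l$ automatically keeps $\omega_n$ in $[0,M_+]$, so no additional argument is required.
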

An outline of the remainder of the paper is as follows: In Section 2 we propose two parallel iterative regularizations methods (PIRMs) for system $(\ref{eq1})$, namely implicit PIRM and explicit PIRM. The convergence of these PIRMs is established for both exact and noisy data cases. Section 3 studies a parallel regularizing Newton-type method for system $(\ref{eq1})$. The convergence analysis of the proposed method in exact and noisy data cases is also studied.
\section{EQUATIONS WITH INVERSE UNIFORMLY ACCRETIVE OPERATORS}
\setcounter{lemma}{0}
\setcounter{theorem}{0}
\setcounter{equation}{0}
In this section, we consider system $(\ref{eq1})$ with inverse uniformly accretive operators. Clearly, if each operator $A_i$ is $\varphi_i$-inverse uniformly accretive, then it is $\varphi$-inverse uniformly accretive with $\varphi (s,t): = \underset {i=1,..., N} \min \varphi_i(s,t).$ Thus, without loss of generality  we can assume that all the operators $A_i,   i=1, \ldots, N$ are $\varphi$-inverse uniformly accretive with the same funtion $\varphi.$\\ 
We begin with the following simple fact (cf. \cite{AC2011}).
\begin{lemma} \label{lem.iquivalent}
Suppose $A_i, i=1,2,\ldots, N,$ are inverse uniformly accretive operators. If system $(\ref{eq1})$ is consistent, then it is equivalent to the operator equation
\begin{equation}\label{eq:sum.equa}
A(x):=\sum_{i=1}^N A_i(x)=0.
\end{equation}
\end{lemma}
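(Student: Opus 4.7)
The plan is to prove the two directions of the equivalence. The forward implication is immediate: if $x$ solves the system $(\ref{eq1})$, i.e.\ $A_i(x)=0$ for every $i=1,\ldots,N$, then clearly $A(x)=\sum_{i=1}^N A_i(x)=0$, so $x$ solves $(\ref{eq:sum.equa})$.

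For the converse, I would use the hypothesis of consistency to pick a common solution $x^* \in S$, so $A_i(x^*)=0$ for every $i$. Let $x$ be any solution of $(\ref{eq:sum.equa})$, and set $R:=\max\{\|x\|,\|x^*\|\}$. The key step is to apply the inverse uniform accretivity inequality $(\ref{eq:quasiISA})$ to each pair $(x,x^*)$, exploiting the fact that $A_i(x^*)=0$:
\begin{equation*}
\left\langle A_i(x),\, J(x-x^*)\right\rangle = \left\langle A_i(x)-A_i(x^*),\, J(x-x^*)\right\rangle \ge \varphi\left(R,\,\|A_i(x)\|\right), \qquad i=1,\ldots,N.
\end{equation*}

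Now summing over $i$ and using linearity of the dual pairing in the first argument gives
\begin{equation*}
\sum_{i=1}^N \varphi\left(R,\,\|A_i(x)\|\right) \le \left\langle \sum_{i=1}^N A_i(x),\, J(x-x^*)\right\rangle = \left\langle A(x),\, J(x-x^*)\right\rangle = 0,
\end{equation*}
because $A(x)=0$ by assumption. Since every term $\varphi(R,\|A_i(x)\|)$ is nonnegative (as $\varphi(R,\cdot)$ is strictly increasing from $0$ on $\mathbb{R}_*^+$), each must vanish. Invoking the characterization from Definition~\ref{def:quasiISA} that $\varphi(R,t)=0$ if and only if $t=0$, we conclude $\|A_i(x)\|=0$ for every $i$, i.e.\ $x$ solves $(\ref{eq1})$.

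There is no real obstacle here: the proof is a direct application of accretivity together with the definiteness property of $\varphi$. The only point requiring a little care is the choice of the common radius $R$ so that $(\ref{eq:quasiISA})$ is applicable uniformly in $i$, and the observation that nonnegativity plus summing to zero forces each summand to vanish; both are immediate from the definition of inverse uniform accretivity.
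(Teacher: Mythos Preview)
Your proof is correct and follows essentially the same argument as the paper: pick a common solution from the consistency hypothesis, apply the inverse uniform accretivity inequality to each $A_i$, sum, observe the total is zero, and use nonnegativity plus the definiteness of $\varphi$ to conclude each $A_i(x)=0$. The only differences from the paper's version are notational.
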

\begin{proof}
Let the opeartors $A_i,   i=1, \ldots, N,$ be $\varphi$-inverse uniformly accretive with the same funtion $\varphi.$  Obviously, any solution of  $(\ref{eq1})$ is a solution of $(\ref{eq:sum.equa})$. Conversely, let $y$ be a solution of $(\ref{eq:sum.equa}),$ i.e.,
\begin{equation*}\label{eq4}
\sum_{i=1}^N A_i(y)=0.
\end{equation*}
Let $z$ be a solution of  system $(\ref{eq1})$, i.e., $A_i(z)=0, i=1,2 \ldots N$. Then, $\sum_{i=1}^N (A_i(y)-A_i(z))=0,$ and since $A_i$ are inverse uniformly accretive, one gets
\begin{equation*}\label{eq:5}
\sum_{i=1}^N \varphi \left(R,\left\|A_i(y)-A_i(z)\right\|\right) \le \sum_{i=1}^N \left\langle A_i(y)-A_i(z),J(y-z)\right\rangle =0,
\end{equation*}
where $R=\max\left\{\left\|y\right\|,\left\|z\right\|\right\}$. Thus $\varphi \left(R,\left\|A_i(y)-A_i(z)\right\|\right)= 0$, hence
$A_i(y)=A_i(z)=0, \quad i=1,2,\ldots N$. Therefore, $y$ is a solution of system $(\ref{eq1})$. 
\end{proof}
In this section we need the following result .
\begin{lemma}\label{buong}{\rm (\cite{BP2012},Theorem 2.1)}.
Let $X$ be a real, reflexive and strictly convex Banach space with a uniformly Gateaux differentiable norm and let $A$ be an $m$-accretive mapping on $X$. Then for each $\alpha >0$ and a fixed $y \in X$, equation $(\ref{eq:lem1})$ possesses a unique solution $x_\alpha$, and in addition, if the solution set $S_A$ of the equation $A(x) = y$ is nonempty, then the net $\{x_\alpha\}$ converges strongly to the unique element $\widehat{x}^*$ solving the following variational inequality
\begin{equation*}
\left\langle \widehat{x}^*,J(\widehat{x}^*-x^*)\right\rangle \le 0, \forall x^* \in S_A.
\end{equation*}
Moreover we have $||x_\alpha^\delta - x_\alpha|| \leq \delta/\alpha,$ where $x_\alpha^\delta$ is the unique solution of the equation $A(x) + \alpha x = y_\delta,$ for any $\alpha >0$ and $y_\delta \in X$ satisfying $||y_\delta - y || \leq \delta.$  
\end{lemma}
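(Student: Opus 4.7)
The plan is to prove the three assertions (existence/uniqueness of $x_\alpha$, convergence of the net, and the stability bound) in that order, building each on top of the previous one. Throughout, the key algebraic device is to subtract the defining equations, pair with the duality mapping applied to the difference, and exploit accretivity together with the geometric properties of $X$.

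First, I would establish existence and uniqueness of $x_\alpha$. Existence is immediate from the definition of $m$-accretivity: $R(A+\alpha I)=X$, so the equation $A(x)+\alpha x=y$ is solvable. For uniqueness, if $x_1,x_2$ are two solutions, then $A(x_1)-A(x_2)+\alpha(x_1-x_2)=0$; pairing with $J(x_1-x_2)$ and using accretivity of $A$ gives $\alpha\|x_1-x_2\|^2\le 0$, hence $x_1=x_2$.

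Next, the convergence of $\{x_\alpha\}$. The strategy has four steps. \emph{(i) Boundedness.} Pick any $x^*\in S_A$; since $A(x_\alpha)-A(x^*)=-\alpha x_\alpha$, pairing with $J(x_\alpha-x^*)$ and using accretivity yields
\[
\|x_\alpha-x^*\|^2 \le \langle -x^*, J(x_\alpha-x^*)\rangle,
\]
which gives $\|x_\alpha-x^*\|\le\|x^*\|$, so $\{x_\alpha\}$ is bounded. \emph{(ii) Weak subsequential limits lie in $S_A$.} Reflexivity of $X$ allows me to extract a subnet $x_{\alpha_k}\rightharpoonup\bar x$; since $A(x_{\alpha_k})=y-\alpha_k x_{\alpha_k}\to y$ strongly and $m$-accretive operators are demiclosed, $\bar x\in D(A)$ and $A(\bar x)=y$, so $\bar x\in S_A$. \emph{(iii) Identification of $\bar x$ as the VI solution.} Rearranging the inequality in (i), $\langle x_\alpha,J(x_\alpha-x^*)\rangle\le 0$ for every $x^*\in S_A$. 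Using uniform G\^ateaux differentiability, $J$ is norm-to-weak$^*$ continuous on bounded sets, so passing to the limit along the subnet (after first strengthening weak to strong convergence, see the next sentence) gives $\langle\bar x,J(\bar x-x^*)\rangle\le 0$ for every $x^*\in S_A$. Uniqueness of the VI solution follows by adding the inequality with $x^*=\bar x_2$ to its counterpart with $\bar x_2$ in place of $\bar x$: strict convexity (via accretivity of $J$) gives $\bar x_1=\bar x_2=:\hat x^*$. \emph{(iv) Strong convergence of the whole net.} Applying the inequality in (i) with the choice $x^*=\hat x^*$ and using that $J(\hat x^*-x_{\alpha_k})$ is bounded in $X^*$ and weak$^*$ tends to $0$ as $x_{\alpha_k}\rightharpoonup\hat x^*$ (the pairing $\langle\hat x^*,J(\hat x^*-x_{\alpha_k})\rangle\to 0$ by norm-to-weak$^*$ continuity of $J$ on bounded sets under UGD), yields $\|x_{\alpha_k}-\hat x^*\|\to 0$. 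A standard subsequence argument then upgrades this to convergence of the whole net.

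The stability estimate is the easiest part: subtract $A(x_\alpha)+\alpha x_\alpha=y$ from $A(x_\alpha^\delta)+\alpha x_\alpha^\delta=y_\delta$, pair with $J(x_\alpha^\delta-x_\alpha)$, and drop the nonnegative accretivity term to obtain $\alpha\|x_\alpha^\delta-x_\alpha\|^2\le\|y_\delta-y\|\|x_\alpha^\delta-x_\alpha\|\le\delta\|x_\alpha^\delta-x_\alpha\|$.

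The main obstacle is step (iii)--(iv) of the convergence argument: I must leverage the uniform G\^ateaux differentiability of the norm to get enough continuity of $J$ to pass the variational inequality $\langle x_\alpha,J(x_\alpha-x^*)\rangle\le 0$ to the limit, and simultaneously upgrade weak to strong convergence of the subnet. Extracting strong convergence without a Kadec--Klee property requires one to first identify the weak limit as the VI solution $\hat x^*$ and then use the tailored inequality with $x^*=\hat x^*$ to collapse the norm difference; this circular interplay between ``weak limit satisfies VI'' and ``VI at the limit forces strong convergence'' is the delicate part and the place where reflexivity, strict convexity, and UGD are all used simultaneously.
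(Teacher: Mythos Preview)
The paper does not supply its own proof of this lemma: it is quoted verbatim as Theorem~2.1 of \cite{BP2012} and used as a black box, so there is no in-paper argument to compare against.

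That said, your sketch has a genuine gap at step~(ii). You assert that ``$m$-accretive operators are demiclosed,'' meaning that $x_{\alpha_k}\rightharpoonup\bar x$ together with $A(x_{\alpha_k})\to y$ strongly forces $A(\bar x)=y$. In a Hilbert space this follows from maximal monotonicity, but for accretive operators in a general Banach space it does \emph{not}: the accretivity inequality involves $J(x_{\alpha_k}-u)$, and weak convergence of $x_{\alpha_k}$ gives no control over these duality images. Without an additional hypothesis (e.g.\ weak sequential continuity of $J$, which is not assumed here), you cannot pass to the limit in $\langle A(x_{\alpha_k})-A(u),J(x_{\alpha_k}-u)\rangle\ge 0$ to conclude $\bar x\in S_A$. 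This breaks the chain, since steps~(iii) and~(iv)---whose circularity you already flag---presuppose that every weak subnet limit lies in $S_A$.

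The argument in \cite{BP2012} (and the closely related classical results of Reich and others) avoids this obstacle entirely. One rewrites $x_\alpha=(I+\alpha^{-1}A)^{-1}(\alpha^{-1}y)$ in terms of the nonexpansive resolvent $J_\lambda=(I+\lambda A)^{-1}$, uses that $S_A$ is the fixed-point set of $J_\lambda$, and then invokes the machinery for approximating fixed points of nonexpansive maps under a uniformly G\^ateaux differentiable norm (Banach limits, sunny nonexpansive retractions onto $S_A$). Strong convergence is obtained without ever needing demiclosedness of $A$ itself. Your existence/uniqueness and stability arguments are fine, but for the convergence part you should either supply a correct demiclosedness statement valid under the stated hypotheses or switch to the resolvent route.
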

In the remainder of Section 2, we impose two sets of conditions on the space $X$, the duality mapping $J$, and the operators $A_i, i=1,2,\ldots,N.$\\
{\bf Conditions (AJX)}
\begin{itemize}
\item [A1.] $A_i, i=1,2,\ldots,N,$ {\it are $\varphi$-inverse uniformly accretive operators with $D(A_i)=X$;}
\item  [A2.]{\it The normalized duality mapping $J$ is sequentially weakly continuous and continuous;} 
\item [A3.] {\it $X$ is a smooth and reflexive Banach space, possessing the approximation.}
\end{itemize}
{\bf Conditions (AX)}
\begin{itemize}
\item [B1.]   $A_i, i=1,2,\ldots,N,$ {\it are  $m$-accretive and $\varphi$-inverse uniformly accretive operators with} $D(A_i)=X;$ 
\item [B2.] $X$ {\it is a uniformly smooth and uniformly convex Banach space.}
\end{itemize}
Together with equation $(\ref{eq:sum.equa})$ we consider the following regularized one
\begin{equation}\label{eq:regu.sum.equ}
A(x)+\alpha_n x=\sum_{i=1}^NA_i(x)+\alpha_n x=0.
\end{equation}
\begin{lemma}\label{lem.regu.solution}
Let conditions {\rm A1-A3}  or  {\rm B1-B2}   be fulfilled. Then the following statements hold:
\begin{enumerate}
\item [{\rm i)}] For every $\alpha_n >0$, equation $(\ref{eq:regu.sum.equ})$ has a unique solution $x_n^*$.
\item [{\rm ii)}] $\left\|x_n^*\right\| \le 2\left\|\widehat{x}\right\|$, where $\widehat{x}$ is an arbitrary element of $S$.
\item [{\rm iii)}] $x_n^* \to \widehat{x}^*$ as $n \to+\infty$, where $\widehat{x}^*$ is an unique solution of the inequality $\left\langle \widehat{x}^*,J(\widehat{x}^*-x^*)\right\rangle \le 0, \forall x^* \in S.$
\item [{\rm iv)}] $\left\|x_n^*-x_{n+1}^*\right\|\le 2\left\|\widehat{x}^*\right\|\frac{\left|\alpha_{n+1}-\alpha_n\right|}{\alpha_n}.$\\
\item[{\rm v)}] $\left\|A_i(x_n^*)\right\| \le \varphi_{R}^{-1}\left(6\alpha_n\left\|\widehat{x}^*\right\|^2\right) \quad i=1,2,\ldots,N$, where $R >0$ is a fixed number satisfying an a-priori estimate $R \geq 2\left\|\widehat{x}^*\right\|$ and $\varphi_{s}^{-1}$ denotes the inverse function of $\varphi \left(s,t\right)$ with respect to the second variable $t$ for fixed $s>0$.
\end{enumerate}
\end{lemma}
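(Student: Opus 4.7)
The plan is to treat each of the five assertions in turn, exploiting the fact that $A:=\sum_{i=1}^N A_i$ inherits all of the accretivity, regularity and range properties from its summands, so that the existing infrastructure (Lemmas~1.1 and~2.2) can be invoked on the single operator $A$ rather than on each $A_i$.

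For part (i), first I would observe that $A$ is accretive (the sum of accretive operators is accretive) and that $A$ has full domain. Under the conditions A1--A3, every $\varphi$-inverse uniformly accretive operator is continuous, so $A$ is hemicontinuous; then Lemma~1.1 applied to $A$ with right-hand side $y=0$ yields the unique solution $x_n^*$. Under B1--B2, I would argue that a finite sum of $m$-accretive operators with common full domain is again $m$-accretive, so Lemma~2.2 (the result of Buong--Phuong applied to $A$) gives both existence and uniqueness. Parts (ii), (iv), (v) are all proved by the same mechanism: subtract the defining equation for $x_n^*$ from a second equation and pair with a suitable value of $J$. For (ii) one subtracts $A(\widehat x)=0$ and pairs with $J(x_n^*-\widehat x)$; accretivity of $A$ kills the $A$-terms and leaves $\alpha_n\langle x_n^*,J(x_n^*-\widehat x)\rangle\le 0$, which expands via $\langle x_n^*-\widehat x,J(x_n^*-\widehat x)\rangle=\|x_n^*-\widehat x\|^2$ to give $\|x_n^*-\widehat x\|\le\|\widehat x\|$ and hence $\|x_n^*\|\le 2\|\widehat x\|$. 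For (iv) I would subtract the equations at levels $n$ and $n+1$, pair with $J(x_n^*-x_{n+1}^*)$, use accretivity to discard the $A$-terms, rearrange to $\alpha_n\|x_n^*-x_{n+1}^*\|^2\le(\alpha_{n+1}-\alpha_n)\langle x_{n+1}^*,J(x_n^*-x_{n+1}^*)\rangle$, apply Cauchy--Schwarz and the bound from (ii) with $\widehat x=\widehat x^*$. For (v) the relation $A(x_n^*)-A(\widehat x^*)=-\alpha_n x_n^*$ paired with $J(x_n^*-\widehat x^*)$, combined with the $\varphi$-inverse-uniform-accretive inequality for each summand and the bounds $\|x_n^*\|,\|\widehat x^*\|\le R$, $\|x_n^*-\widehat x^*\|\le 3\|\widehat x^*\|$, yields $\sum_i\varphi(R,\|A_i(x_n^*)\|)\le 6\alpha_n\|\widehat x^*\|^2$, from which the desired bound on each $\|A_i(x_n^*)\|$ follows by monotonicity of $\varphi_R^{-1}$.

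Part (iii) will be the main obstacle. Under B1--B2 it is immediate from Lemma~2.2 applied to $A$. Under A1--A3 I would argue as follows: the sequence $\{x_n^*\}$ is bounded by (ii), so by reflexivity a subsequence satisfies $x_{n_k}^*\rightharpoonup\bar x$. Since $A(x_{n_k}^*)=-\alpha_{n_k}x_{n_k}^*\to 0$ strongly and $J$ is sequentially weakly continuous, a Minty-type argument works: from $\langle A(x_{n_k}^*)-A(y),J(x_{n_k}^*-y)\rangle\ge 0$ for arbitrary $y\in X$ I can pass to the limit (the term $\langle A(x_{n_k}^*),J(x_{n_k}^*-y)\rangle\to 0$ by the strong/bounded pairing, and $\langle A(y),J(x_{n_k}^*-y)\rangle\to\langle A(y),J(\bar x-y)\rangle$ since $J(x_{n_k}^*-y)\rightharpoonup J(\bar x-y)$ in $X^*$) to obtain $\langle A(y),J(y-\bar x)\rangle\ge 0$ for every $y$; specializing $y=\bar x+tz$, dividing by $t$ and letting $t\downarrow 0$, then choosing $z=-A(\bar x)$, forces $A(\bar x)=0$, so $\bar x\in S$. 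To upgrade weak to strong convergence I would use the inequality $\|x_n^*-x^*\|^2\le\langle x^*,J(x^*-x_n^*)\rangle$ (derived from $\langle x_n^*,J(x_n^*-x^*)\rangle\le 0$, which itself comes from accretivity paired with $J(x_n^*-x^*)$) with $x^*=\bar x$; sequential weak continuity gives $J(\bar x-x_{n_k}^*)\rightharpoonup 0$, so $\|x_{n_k}^*-\bar x\|\to 0$. Passing to the limit in $\langle x_{n_k}^*,J(x_{n_k}^*-x^*)\rangle\le 0$ for every $x^*\in S$ yields the variational inequality characterizing $\widehat x^*$, and since $J$ is strictly monotone on a strictly convex space this inequality has at most one solution. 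A standard subsequence-of-subsequence argument then promotes convergence of a subsequence to convergence of the full sequence.
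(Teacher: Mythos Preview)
Your argument is correct and follows the same underlying route as the paper. The paper is simply much terser: under A1--A3 it refers the reader to \cite{AR2006} for statements (i)--(iv) and writes out a proof only for (v), and that proof coincides with yours (pair $A(x_n^*)+\alpha_n x_n^*=0$ with $J(x_n^*-\widehat{x}^*)$, use $\varphi$-inverse uniform accretivity of each $A_i$, and the bounds $\|x_n^*\|\le 2\|\widehat{x}^*\|$, $\|x_n^*-\widehat{x}^*\|\le 3\|\widehat{x}^*\|$). Under B1--B2 the paper invokes Lemma~\ref{buong} exactly as you do. Your explicit derivations of (ii) and (iv) via accretivity-and-pairing, and your Minty-type argument combined with the weak-to-strong upgrade $\|x_{n_k}^*-\bar x\|^2\le\langle\bar x,J(\bar x-x_{n_k}^*)\rangle\to 0$ for (iii), are precisely the standard arguments one would find in the cited reference, so the approaches coincide even though your write-up is more self-contained.

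One technical point deserves tightening. Your claim that ``a finite sum of $m$-accretive operators with common full domain is again $m$-accretive'' is not automatic: sums of maximal accretive operators can fail to be maximal without extra hypotheses. The paper deals with this explicitly by noting that each $A_i$, being inverse uniformly accretive, is continuous and hence locally bounded, and then citing a specific sum theorem (Theorem~1.15.22 of \cite{AR2006}), which relies on the uniform convexity of both $X$ and $X^*$ assumed in B2. You should supply a similar justification rather than assert closure under sums as a given.
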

\begin{proof}
1. Suppose that conditions A1-A3 hold. We perform the regularization process $(\ref{eq:regu.sum.equ})$ for equation $(\ref{eq:sum.equa})$ with the accretive operator $A = \sum_{i=1}^N A_i.$  For the proofs of  statements $i)-iv)$ we refer the reader to \cite{AR2006}. Concerning the last part $v)$  we observe that 
$$ \sum_{i=1}^N (A_i(x_n^*)-A_i(\widehat{x}^*)) +\alpha_n x_n^*=0,$$
hence, $$ \sum_{i=1}^N\left\langle A_i(x_n^*)-A_i(\widehat{x}^*),J(x_n^*-\widehat{x}^*)\right\rangle +\alpha_n \left\langle x_n^*,J(x_n^*-\widehat{x}^*)\right\rangle = 0.$$
Observing that by part ii) $\left\|x_n^*\right\| \le 2\left\|\widehat{x}^*\right\|,$ hence, $\left\|x_n^*-\widehat{x}^*\right\| \le 3\left\|\widehat{x}^*\right\|$ and using the inverse uniform accretiveness of $A_i$, from the last inequality we have
\begin{equation*}\label{eq:11}
\sum_{i=1}^N\varphi\left(R,\left\|A_i(x_n^*)-A_i(\widehat{x}^*)\right\|\right) \le  -\alpha_n \left\langle x_n^*,J(x_n^*-\widehat{x}^*)\right\rangle \le \alpha_n \left|| x_n^*\right||.\left||x_n^*-\widehat{x}^*\right||,
\end{equation*}
where $R \geq 2\left\|\widehat{x}^*\right\|$.
The last inequality gives $\varphi\left(R,\left\|A_i(x_n^*)\right\|\right) \le 6\alpha_n \left\|\widehat{x}^*\right\|^2$. Thus $\left\|A_i(x_n^*)\right\| \le \varphi_{R}^{-1}\left(6\alpha_n\left\|\widehat{x}^*\right\|^2\right)$.\\
2. Now suppose that conditions B1-B2 hold. Since all $A_i$ are inverse uniformly accretive, they are continuous, hence locally bounded. Besides, $A_i,  i=1,\ldots N,$ are $m$-accretive, $D(A_i) = X,$ and the spaces $X$ and $X^*$ are uniformly convex, then by  Theorem 1.15.22 (\cite{AR2006}),  the operator $A = \sum_{i=1}^N A_i$  is also $m$-accretive. Lemma  $\ref{buong}$ applied to equation $(\ref{eq:regu.sum.equ})$ ensures the convergence of regularized solutions $x_n^*$ to $\widehat{x}^*.$ The remaining statements can be argued similarly as in part 1.
\end{proof}
Following \cite{AC2009} we consider an implicit PIRM consisting of solving simultaneously $N$ regularized equations
\begin {equation}\label{eq:12}
A_i(x_n^i)+(\frac{\alpha_n}{N}+\gamma_n)x_n^i=\gamma_n x_n, i=1,2,\ldots,N,
\end{equation}
where $\alpha_n >0$ and $\gamma_n> 0$ are regularization and parallel splitting up parameters, respectively, and defining the next approximation as an average of the regularized solutions $x_n^i$,
\begin{equation}\label{eq:13}
x_{n+1}=\frac{1}{N}\sum_{i=1}^Nx_n^i, \quad  n= 0,1, \ldots, \quad x_0 \in X. 
\end{equation}
According to Lemmas $\ref{buong}$, $\ref{lem.regu.solution}$,  all the problems $(\ref{eq:12})$ are well posed and independent from each other, hence the regularized solutions $x_n^i$ can be found stably and simultaneously by parallel processors.\\
We first prove the boundedness of the sequence  $\left\{x_n\right\}$ defined by the implicit PIRM $(\ref{eq:12})$-$(\ref{eq:13})$.
\begin{lemma} \label{lemma.imp.boundedness}
Under conditions {\rm A1-A3} or {\rm B1-B2}, the sequence $\left\{x_n\right\}$ generated by $(\ref{eq:12})$ and $(\ref{eq:13})$ is bounded.
\end{lemma}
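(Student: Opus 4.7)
The plan is to compare each regularized iterate $x_n^i$ with a fixed common limit point $\widehat{x}^*$ given by part iii) of Lemma~2.3 (so that $A_i(\widehat{x}^*)=0$ for every $i$), and then exploit the averaging formula (2.13) to obtain a linear recursion with a convex-combination structure in $\|x_n-\widehat{x}^*\|$.

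First, I would subtract the identity $A_i(\widehat{x}^*) + (\alpha_n/N + \gamma_n)\widehat{x}^* = \gamma_n\widehat{x}^* + (\alpha_n/N)\widehat{x}^*$ (really just add and subtract $(\alpha_n/N+\gamma_n)\widehat{x}^*$) from equation $(\ref{eq:12})$, obtaining
\begin{equation*}
A_i(x_n^i)-A_i(\widehat{x}^*) + \Bigl(\tfrac{\alpha_n}{N}+\gamma_n\Bigr)(x_n^i-\widehat{x}^*) = \gamma_n(x_n-\widehat{x}^*) - \tfrac{\alpha_n}{N}\widehat{x}^*.
\end{equation*}
Pairing with $J(x_n^i-\widehat{x}^*)$ and dropping the nonnegative accretive term $\langle A_i(x_n^i)-A_i(\widehat{x}^*),J(x_n^i-\widehat{x}^*)\rangle \geq 0$ (which holds because inverse uniformly accretive operators are accretive, see Example~1.1), the Cauchy--Schwarz bound $|\langle u, J(v)\rangle|\le \|u\|\,\|v\|$ yields
\begin{equation*}
\Bigl(\tfrac{\alpha_n}{N}+\gamma_n\Bigr)\|x_n^i-\widehat{x}^*\| \;\le\; \gamma_n\|x_n-\widehat{x}^*\| + \tfrac{\alpha_n}{N}\|\widehat{x}^*\|.
\end{equation*}

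Next I would average the above over $i=1,\dots,N$ and use the triangle inequality together with definition (\ref{eq:13}) to get
\begin{equation*}
\|x_{n+1}-\widehat{x}^*\| \;\le\; \frac{1}{N}\sum_{i=1}^N \|x_n^i-\widehat{x}^*\| \;\le\; \beta_n\|x_n-\widehat{x}^*\| + (1-\beta_n)\|\widehat{x}^*\|,
\end{equation*}
where $\beta_n := \gamma_n/(\alpha_n/N+\gamma_n)\in(0,1)$ and $1-\beta_n = (\alpha_n/N)/(\alpha_n/N+\gamma_n)$. Since the right-hand side is a convex combination of $\|x_n-\widehat{x}^*\|$ and $\|\widehat{x}^*\|$, induction immediately gives
\begin{equation*}
\|x_n-\widehat{x}^*\| \;\le\; \max\bigl\{\|x_0-\widehat{x}^*\|,\,\|\widehat{x}^*\|\bigr\} \quad \text{for all } n\ge 0,
\end{equation*}
which proves boundedness of $\{x_n\}$. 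Note that the argument never uses specific rates on $\alpha_n$ or $\gamma_n$ nor the full inverse uniform accretivity beyond plain accretivity, and it is identical under either assumption set A1--A3 or B1--B2, the latter only being needed to guarantee existence of the regularized solutions $x_n^i$ via Lemma~\ref{buong}.

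The only delicate point is ensuring that a single element $\widehat{x}^*$ with $A_i(\widehat{x}^*)=0$ for all $i$ can be used as the reference: this is supplied by Lemma~\ref{lem.iquivalent} (the consistent system is equivalent to the summed equation) together with Lemma~\ref{lem.regu.solution}(iii), so this is not really an obstacle. The core of the proof is just recognizing the convex-combination structure of the recursion for $\|x_n-\widehat{x}^*\|$.
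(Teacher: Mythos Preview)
Your proof is correct and follows essentially the same approach as the paper: both derive the key inequality $(\alpha_n/N+\gamma_n)\|x_n^i-\widehat{x}^*\|\le\gamma_n\|x_n-\widehat{x}^*\|+(\alpha_n/N)\|\widehat{x}^*\|$ by pairing the rewritten regularized equation with $J(x_n^i-\widehat{x}^*)$ and dropping the accretive term, then average and induct. The only cosmetic difference is that the paper fixes a ball $B_r(\widehat{x}^*)$ with $r\ge\max\{\|x_0-\widehat{x}^*\|,\|\widehat{x}^*\|\}$ and shows it is invariant, whereas you phrase the same recursion as a convex combination; the resulting bound is identical.
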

\begin{proof}By Lemma $\ref{AlbRya2006}$ (Lemma $\ref{buong}$), the regularized equation $(\ref{eq:12})$ has a unique solution denoted by $x_n^i$. Let $B_r(\widehat{x}^*)$ be the closed ball with center $\widehat{x}^*$and radius $r$. Choose $r>0$ sufficiently large such that $r\ge \left\|\widehat{x}^*\right\|$ and $x_0 \in B_r(\widehat{x}^*)$. Supposing for some $n>0$,  $x_n \in B_r(\widehat{x}^*)$, we will show that $x_{n+1} \in B_r(\widehat{x}^*)$. Indeed, from $(\ref{eq:12})$ and $A_i(\widehat{x}^*)=0$, we get 
\begin{equation*}\label{eq:14}
(A_i(x_n^i)-A_i(\widehat{x}^*))+(\frac{\alpha_n}{N}+\gamma_n)(x_n^i-\widehat{x}^*)=\gamma_n(x_n-\widehat{x}^*)-\frac{\alpha_n}{N}\widehat{x}^*.
\end{equation*}
Thus
\begin{align*}
\left\langle A_i(x_n^i)-A_i(\widehat{x}^*),J(x_n^i-\widehat{x}^*)\right\rangle &+(\frac{\alpha_n}{N}+\gamma_n)\left\langle x_n^i-\widehat{x}^*,J(x_n^i-\widehat{x}^*)\right\rangle\\ 
& =\gamma_n\left\langle x_n-\widehat{x}^*,J(x_n^i-\widehat{x}^*) \right\rangle -\frac{\alpha_n}{N}\left\langle \widehat{x}^*,J(x_n^i-\widehat{x}^*)\right\rangle.
\end{align*}
By the accretiveness of $A_i$, we get 
$$ (\frac{\alpha_n}{N}+\gamma_n)\left\|x_n^i-\widehat{x}^*\right\|^2 \le \gamma_n \left\|x_n-\widehat{x}^*\right\| \left\|x_n^i-\widehat{x}^*\right\|+\frac{\alpha_n}{N}\left\|\widehat{x}^*\right\| \left\|x_n^i-\widehat{x}^*\right\|,$$
hence
$$ (\frac{\alpha_n}{N}+\gamma_n)\left\|x_n^i-\widehat{x}^*\right\| \le \gamma_n \left\|x_n-\widehat{x}^*\right\|+\frac{\alpha_n}{N}\left\|\widehat{x}^*\right\|.$$
Using the inequalities $\left\|x_n-\widehat{x}^*\right\| \le r$ and $r\ge \left\|\widehat{x}^*\right\|$, we have 
$$ (\frac{\alpha_n}{N}+\gamma_n)\left\|x_n^i-\widehat{x}^*\right\| \le \gamma_n r+\frac{\alpha_n}{N}r \le (\frac{\alpha_n}{N}+\gamma_n)r ,$$ which gives
$\left\|x_n^i-\widehat{x}^*\right\| \le r$.
By $(\ref{eq:13})$, one gets 
$$ \left\|x_{n+1}-\widehat{x}^*\right\| \le \frac{1}{N}\sum_{i=1}^{N}\left\|x_n^i-\widehat{x}^*\right\| \le r .$$
Therefore, $x_{n+1}\in B_r(\widehat{x}^*)$. Thus, $\left\{x_n\right\}$ is bounded.
\end{proof}

\begin{theorem}\label{theo.imp.convergence}
Suppose conditions  {\rm A1-A3} or {\rm B1-B2} are fulfilled. Let $\left\{\alpha_n \right\}$ and $\left\{\gamma_n\right\}$ be real sequences, such that
\begin{enumerate}
\item [{\rm i)}] $\alpha_n \to 0, \gamma_n \to +\infty$ as $n\to +\infty$,
\item [{\rm ii)}] $\frac{\gamma_n |\alpha_{n+1}-\alpha_{n}|} {\alpha_n^2} \to 0$ as $n\to +\infty$, $\sum_{n=1}^{\infty}\frac{\alpha_n}{\gamma_n}=+\infty$,
\item [{\rm iii)}]$\frac{h_{X}(\tau_n)\varphi_R^{-1}\left(R_1\alpha_n\right)}{\alpha _n}\to 0$ as $n\to +\infty$, where $R \geq 2||\widehat{x}^*||,  R_1:= \frac{3R^2}{2} $ and $\tau_n =\gamma_n^{-1}$.
\end{enumerate}
If in addition, the function $\frac{\varphi(s,t)}{t}$ is coercive in $t$ for any fixed $s>0,$ i.e., $ \frac{\varphi(s,t)}{t} \to +\infty$ as $t \to +\infty,$ then starting from arbitrary $x_0\in X$, the sequence $\left\{x_n\right\}$ defined by $(\ref{eq:12})$ and $(\ref{eq:13})$ converges strongly to $\widehat{x}^*$.
\end{theorem}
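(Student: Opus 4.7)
The plan is to show $x_n \to \widehat{x}^*$ by comparing the iterates with the regularized solutions $x_n^*$. The triangle inequality gives
$$\|x_n - \widehat{x}^*\| \;\leq\; \|x_n - x_n^*\| + \|x_n^* - \widehat{x}^*\|,$$
and Lemma \ref{lem.regu.solution}(iii) already handles the second summand. So I would reduce the problem to proving $\lambda_n := \|x_n - x_n^*\| \to 0$ and apply the sequence lemma (Lemma \ref{lem.sequence}) to a recursion of the form $\lambda_{n+1} \leq (1-p_n)\lambda_n + b_n$ with $p_n \asymp \alpha_n/\gamma_n$.

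To derive the recursion I would start from (\ref{eq:12}), rewrite it as
$$A_i(x_n^i) - A_i(x_n^*) + (\alpha_n/N + \gamma_n)(x_n^i - x_n^*) \;=\; \gamma_n(x_n - x_n^*) - \bigl(A_i(x_n^*) + (\alpha_n/N)x_n^*\bigr),$$
pair with $J(x_n^i - x_n^*)$, and drop the nonnegative accretivity term. Summing in $i$ and replacing $J(x_n^i - x_n^*)$ by $J(x_{n+1}-x_n^*) + [J(x_n^i - x_n^*) - J(x_{n+1}-x_n^*)]$ lets me exploit the identity $\sum_i [A_i(x_n^*) + (\alpha_n/N)x_n^*] = 0$: the bulk piece aggregates cleanly into a term of the form $N\gamma_n \langle x_n - x_n^*, J(x_{n+1}-x_n^*)\rangle$, while the remainder is a sum of differences of dualities, which Lemma \ref{lem.ineqJ*} controls by $8R\,h_X(16L\|x_n^i - x_{n+1}\|/R)$. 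Using (\ref{eq:12}) one more time, $\|x_n^i - x_{n+1}\|$ is $O(1/\gamma_n)=O(\tau_n)$, so the remainder is bounded by $C\,h_X(\tau_n)\cdot\max_i(\|A_i(x_n^*)\| + (\alpha_n/N)\|x_n^*\|)$. Here Lemma \ref{lem.regu.solution}(v), together with $R \geq 2\|\widehat{x}^*\|$ (hence $6\|\widehat{x}^*\|^2 \leq 3R^2/2 = R_1$), gives $\|A_i(x_n^*)\| \leq \varphi_R^{-1}(R_1\alpha_n)$, which is exactly the quantity appearing in hypothesis (iii). Using $\sum_i\|x_n^i - x_n^*\|^2 \geq N\|x_{n+1}-x_n^*\|^2$ (convexity of the squared norm) and a Young-type splitting of the $N\gamma_n\langle\cdot,\cdot\rangle$ term, I would extract the contraction factor $1 - c\alpha_n/\gamma_n$.

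Combining with the drift estimate $\|x_n^* - x_{n+1}^*\| \leq 2\|\widehat{x}^*\|\,|\alpha_{n+1}-\alpha_n|/\alpha_n$ from Lemma \ref{lem.regu.solution}(iv) and the triangle inequality $\lambda_{n+1} \leq \|x_{n+1}-x_n^*\| + \|x_n^*-x_{n+1}^*\|$, I arrive at
$$\lambda_{n+1} \;\leq\; (1-p_n)\lambda_n + b_n,\qquad p_n \asymp \frac{\alpha_n}{\gamma_n},\qquad b_n \lesssim \frac{h_X(\tau_n)\,\varphi_R^{-1}(R_1\alpha_n)}{\gamma_n} + 2\|\widehat{x}^*\|\frac{|\alpha_{n+1}-\alpha_n|}{\alpha_n}.$$
Hypothesis (i) guarantees $p_n\in(0,1)$ eventually; the second part of (ii) yields $\sum p_n = +\infty$; and (iii) together with the first part of (ii) gives $b_n/p_n \to 0$. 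Lemma \ref{lem.sequence} then forces $\lambda_n\to 0$, completing the proof. The coercivity of $\varphi(s,\cdot)/t$ at infinity is used only to ensure that $\varphi_R$ is a bijection of $[0,\infty)$ onto $[0,\infty)$, so that $\varphi_R^{-1}$ is defined on the full range appearing in the estimates.

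\textbf{Main obstacle.} The delicate step is the bookkeeping of the cross-term $\sum_i\langle A_i(x_n^*)+(\alpha_n/N)x_n^*,\,J(x_n^i-x_n^*)-J(x_{n+1}-x_n^*)\rangle$: one must simultaneously (a) exploit the cancellation $\sum_i[A_i(x_n^*)+(\alpha_n/N)x_n^*]=0$ to replace $J(x_n^i-x_n^*)$ by its deviation from $J(x_{n+1}-x_n^*)$, (b) bound that deviation via Lemma \ref{lem.ineqJ*} with a radius $R$ compatible with the a priori bound of Lemma \ref{lemma.imp.boundedness}, and (c) estimate $\|x_n^i - x_{n+1}\|$ as $O(\tau_n)$ using the identity $A_i(x_n^i) - (1/N)\sum_j A_j(x_n^j) + (\alpha_n/N + \gamma_n)(x_n^i - x_{n+1}) = 0$ obtained by subtracting the averaged scheme from (\ref{eq:12}). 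If these three ingredients are balanced correctly, the product $h_X(\tau_n)\,\varphi_R^{-1}(R_1\alpha_n)$ in hypothesis (iii) falls out of the analysis exactly as needed.
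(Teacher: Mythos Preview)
Your overall strategy---compare $x_n$ with the regularized solution $x_n^*$, derive a contractive recursion for the error, and invoke Lemma~\ref{lem.sequence}---matches the paper's, and most of the ingredients you list (the cancellation $\sum_i[A_i(x_n^*)+(\alpha_n/N)x_n^*]=0$, Lemma~\ref{lem.ineqJ*} for the duality-map perturbation, Lemma~\ref{lem.regu.solution}(iv)--(v) for the drift and the size of $\|A_i(x_n^*)\|$) are the right ones. But the way you extract the ``bulk piece'' $N\gamma_n\langle x_n-x_n^*,J(x_{n+1}-x_n^*)\rangle$ creates a remainder you do not account for: replacing $J(x_n^i-x_n^*)$ by $J(x_{n+1}-x_n^*)$ in the term $\gamma_n\langle x_n-x_n^*,J(x_n^i-x_n^*)\rangle$ generates $\gamma_n\langle x_n-x_n^*,\,J(x_n^i-x_n^*)-J(x_{n+1}-x_n^*)\rangle$, bounded only by $C\gamma_n\|x_n-x_n^*\|\,h_X(c\tau_n)$. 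This carries a factor $\gamma_n$ that your error budget does not absorb under (i)--(iii). The paper sidesteps this: after multiplying through by $\tau_n$ it applies Lemma~\ref{lem.ineqconvex} in the form $2\langle e_n^i-e_n,J(e_n^i)\rangle\ge\|e_n^i\|^2-\|e_n\|^2$ (with $e_n^i=x_n^i-x_n^*$, $e_n=x_n-x_n^*$), which converts the troublesome $\gamma_n\langle e_n,J(e_n^i)\rangle$ directly into a squared-norm difference without any duality-map replacement. The replacement $J(e_n^i)\to J(e_n)$ is then done \emph{only} in the residual pairing $\langle A_i(x_n^*)+(\alpha_n/N)x_n^*,\cdot\rangle$, whose prefactor is small (of order $\varphi_R^{-1}(R_1\alpha_n)+\alpha_n$) rather than $\gamma_n$. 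The recursion is written for $\lambda_n=\|e_n\|^2$ with $p_n=\epsilon_n/(1+2\epsilon_n)$, $\epsilon_n=\alpha_n\tau_n/N$.

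Second, your remark that coercivity of $t\mapsto\varphi(s,t)/t$ is used ``only'' to make $\varphi_R$ a bijection understates its role. In both the paper's argument and yours, one must bound $\|x_n^i-x_n\|$ (respectively $\|x_n^i-x_{n+1}\|$) by $M\tau_n$ with a finite constant $M=\sup\{\|A_i(x)+(\alpha_n/N)x\|:\|x\|\le R_0\}$; this requires each $A_i$ to be bounded on bounded sets. The paper deduces that from coercivity: if $\|A_i(z_n)\|\to\infty$ on a bounded sequence then $\varphi(R_0,t_n)/t_n\le R_0$ with $t_n\to\infty$, a contradiction. Without this step the $O(\tau_n)$ estimate---and hence your entire $h_X(\tau_n)$ bound---is unjustified. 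A further technicality you skip is the passage from $h_X(k_0\tau_n)$ to a constant times $h_X(\tau_n)$; the paper handles this via the doubling estimate $\rho_X(2\tau)\le 5\rho_X(\tau)$ for small $\tau$ together with the convexity of $\rho_X$.
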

\begin{proof}
Let $x_n^{*}$ be the unique solution of $(\ref{eq:regu.sum.equ})$.  Setting $e_n^i=x_n^i-x_n^{*};e_n=x_n-x_n^{*}; \epsilon_n=\frac{\alpha_n\tau_n}{N}$, we can rewrite $(\ref{eq:12})$ as
\begin{equation*}\label{eq:15}
x_n^i+\tau_n A_i(x_n^i)+\epsilon_n x_n^i=x_n,
\end{equation*}
or
\begin{equation*}\label{eq:16}
\left(e_n^i-e_n\right)+\tau_n \left[{A}_i(x_n^i) -{A}_i(x_n^{*})\right] +\epsilon_n e_n^i= -\tau_n {A}_i(x_n^{*})-\epsilon_n x_n^*, \quad i=1,2,\ldots N.
\end{equation*}
From the last relation, using the accretiveness of $A_i$ we find
\begin{equation}\label{eq:17}
2\left\langle \left(e_n^i-e_n\right), J(e_n^i)\right\rangle +2\epsilon_n\left\| e_n^i \right\|^2\le -2\left\langle \tau_n {A}_i(x_n^{*})+\epsilon_n x_n^*,J(e_n^i)\right\rangle.
\end{equation}
From Lemma $\ref{lem.ineqconvex}$, we get 
\begin{equation*}\label{eq:18}
2\left\langle e_n^i-e_n,J(e_n^i) \right\rangle \ge \left\| e_n^i \right\|^2 -\left\| e_n \right\|^2.
\end{equation*}
Combining this inequality with $(\ref{eq:17})$, we obtain
\begin{equation*}\label{eq:19}
\left(1+2\epsilon_n\right)\left\| e_n^i \right\|^2-\left\| e_n \right\|^2 \le -2\tau_n\left\langle  {A}_i(x_n^{*})+\frac{\alpha_n}{N} x_n^*,J(e_n^i)\right\rangle,
\end{equation*}
hence,
\begin{equation}\label{eq:19*}
\left(1+2\epsilon_n\right)\sum_{i=1}^N\left\| e_n^i \right\|^2-N\left\| e_n \right\|^2 \le -2\tau_n \sum_{i=1}^N\left\langle{A}_i(x_n^{*})+\frac{\alpha_n}{N} x_n^*,J(e_n^i)\right\rangle.
\end{equation}
Observing that  $x_n^*$ is the solution of $(\ref{eq:regu.sum.equ})$ and using Lemma $\ref{lem.regu.solution}$, we can estimate the right-hand side of  $(\ref{eq:19*})$ as follows
\begin{align} \label {eq:22}
-\sum_{i=1}^N\langle{A}_i(x_n^{*}) \notag &+\frac{\alpha_n}{N} x_n^*,J(e_n^i)\rangle = -\left\langle \sum_{i=1}^{N}{A}_i(x_n^{*})+\alpha_n x_n^* ,J(e_n) \right\rangle\\\notag
&-\sum_{i=1}^{N}\left\langle {A}_i(x_n^{*})+\frac{\alpha_n}{N} x_n^* ,J(e_n^i)-J(e_n) \right\rangle\\  \notag
&\le  \sum_{i=1}^{N} \left(\left\| {A}_i(x_n^{*})\right\|+ \frac{\alpha_n}{N} \left\|x_n^* \right\|\right) \left\| J(e_n^i)-J(e_n)  \right\| \\ 
&\le  \sum_{i=1}^{N} \left(\varphi_R^{-1}\left(6\alpha_n \left\| \widehat{x}^* \right\|^2\right)+\frac{2\alpha_n}{N} \left\|\widehat{x}^* \right\|\right) \left\| J(e_n^i)-J(e_n)  \right\|.
\end{align}
By Lemma $\ref{lem.regu.solution}$ and Lemma $\ref{lemma.imp.boundedness}$, the sequences $\left\{x_n^*\right\}$ , $\left\{x_n \right\}$ and $\left\{x_n^i\right\}$ are bounded, hence the sequences $\left\{e_n \right\}$ and $\left\{e_n^i\right\}$ are also bounded, i.e., there exists a positive constant $C>0$ such that $\left\| e_n \right\|\le C; \left\| e_n^i \right\| \le C;\left\| x_n^i \right\| \le C$. \\
From Lemma $\ref{lem.ineqJ*}$, we get
\begin{equation}\label{eq:23}
\left\|J(e_n^i)-J(e_n)\right\| \le 8C h_{X}\left(\frac{16L\left\|x_n^i-x_n\right\|}{C}\right),
\end{equation}
where $L \in (1, 1.7)$ is Figiel constant. \\
We show that $||A_i(z)|| \leq C_i < +\infty$ for all $||z|| \leq R_0:= C + 2||\widehat {x^*}||$  and $i =1,\ldots,N.$ Indeed, suppose in contrary, that there exists a sequence $\{z_n\}$, such that $||z_n|| \leq R_0,$ and $||A_i(z_n)|| \to \infty$ as $n \to \infty.$ Then $t_n := ||A_i(z_n) - A_i(0)|| \geq ||A_i(z_n)|| - ||A_i(0)|| \to \infty$ as $n \to \infty.$ Since $\varphi(R_0, t_n) = \varphi(R_0, ||A_i(z_n)- A_i(0)||) \leq \left\langle {A}_i(z_n) - {A}_i(0), J(z_n - 0) \right\rangle \leq ||A_i(z_n) - A_i(0)||||z_n|| \leq R_0 t_n,$ we get $R_0 \geq \frac{\varphi(R_0,t_n)}{t_n},$ which contradicts the coerciveness of $\frac{\varphi(R_0,t)}{t} .$ \\
Thus, we can put 
$$ M=\sup \left\{\left\|A_i(x)+\frac{\alpha_n}{N}x \right\|: \left\|x\right\| \le R_0 , n=1,2,\ldots, i=1,2,\ldots,N\right\}. $$
Relation $(\ref{eq:12})$ yields
$$ A_i(x_n^i)+\frac{\alpha_n}{N}x_n^i = \gamma_n \left(x_n -x_n^i\right), i=1,2,\ldots,N, $$
which gives $\gamma_n \left\|x_n -x_n^i\right\| \le \left\|A_i(x_n^i)+\frac{\alpha_n}{N}x_n^i\right\| \le M,$ hence, $\left\|x_n -x_n^i\right\| \le \frac{M}{\gamma_n}=M\tau_n$. Combining the last inequality with $(\ref{eq:23})$, we obtain 
\begin{equation}\label{eq:24}
\left\|J(e_n^i)-J(e_n)\right\| \le c_2 h_{X}(k_0\tau_n ),
\end{equation}
where $c_2=8C, k_0=\frac{16LM}{C}$.
By $(\ref{eq:22})$ and $(\ref{eq:24})$, we have
\begin{equation}\label{eq:25}
-\sum_{i=1}^N\left\langle{A}_i(x_n^{*})+\frac{\alpha_n}{N} x_n^*,J(e_n^i)\right\rangle \le Nc_2\left(\varphi_R^{-1}\left(6\alpha_n \left\| \widehat{x}^* \right\|^2\right)+\frac{2\alpha_n}{N} \left\|\widehat{x}^* \right\|\right)h_{X}(k_0\tau_n ).
\end{equation}
From $(\ref{eq:19*}), (\ref{eq:25})$, we get
\begin{equation}\label{eq:26}
(1+2\epsilon_n)\sum_{i=1}^{N}\left\| e_n^i \right\|^2 \le N\left\| e_n \right\|^2+2Nc_2 \tau_n \left(\varphi_R^{-1}\left(6\alpha_n \left\| \widehat{x}^* \right\|^2\right)+\frac{2\alpha_n}{N} \left\|\widehat{x}^* \right\|\right)h_{X}(k_0\tau_n ).
\end{equation}
Taking into account relation $(\ref{eq:13})$, Lemma $\ref{lem.regu.solution}$, and the inequality $(a+b)^2 \le (1+\epsilon_n)(a^2 +\frac{b^2}{\epsilon_n})$, we find
\begin{align*}\label{eq:27}
\left\| e_{n+1} \right\|^2 &\notag=\left\| x_{n+1}-x_{n+1}^* \right\|^2 \le \left(\left\| x_{n+1}-x_{n}^* \right\|+\left\| x_{n}^*-x_{n+1}^* \right\|\right)^2\\ \notag
&\le \left(\left\| x_{n+1}-x_{n}^* \right\|+2\left\|\widehat{x}^*\right\|\frac{|\alpha_{n+1}-\alpha_n|}{\alpha_n}\right)^2\\ \notag
&\le\left( \frac{1}{N}\sum_{i=1}^{N}\left\| e_n^i \right\|+ 2\left\|\widehat{x}^*\right\|\frac{|\alpha_{n+1}-\alpha_n|}{\alpha_n}\right)^2\\ \notag
&\le \left( \frac{1}{\sqrt{N}}\left(\sum_{i=1}^{N}\left\| e_n^i \right\|^2\right)^{1/2} + 2\left\|\widehat{x}^*\right\|\frac{|\alpha_{n+1}-\alpha_n|}{\alpha_n}\right)^2\\ \notag
&\le \left(1+\epsilon_n\right)\left(  \frac{1}{N}\sum_{i=1}^{N}\left\| e_n^i \right\|^2 +4\left\|\widehat{x}^*\right\|^2\frac{|\alpha_{n+1}-\alpha_n|^2}{\alpha_n^2  \epsilon_n}\right).
\end{align*}
Thus,
\begin{equation}\label{eq:28}
\frac{N}{\left(1+\epsilon_n\right)}\left\| e_{n+1} \right\|^2 -4N\epsilon_n \left(\frac{\alpha_{n+1}-\alpha_n}{\epsilon_n  \alpha_n}\right)^2 \left\|\widehat{x}^*\right\|^2 \le \sum_{i=1}^{N}\left\| e_n^i \right\|^2.
\end{equation} 
From $(\ref{eq:28}), (\ref{eq:26})$, one gets
\begin{align}\label{eq:29}
\left\| e_{n+1} \right\|^2 &\notag\le \frac{1+\epsilon_n}{1+2\epsilon_n}\left\| e_{n} \right\|^2+ 4(1+\epsilon_n) \epsilon_n \left(\frac{\alpha_{n+1}-\alpha_n}{\epsilon_n\alpha_n}\right)^2\left\| \widehat{x}^* \right\|^2 \\ 
&  +\frac{2c_2\left(1+\epsilon_n\right) \tau_n}{1+2\epsilon_n}\left(\varphi_R^{-1}\left(6\alpha_n \left\| \widehat{x}^* \right\|^2\right)+\frac{2\alpha_n}{N} \left\|\widehat{x}^* \right\|\right)h_{X}(k_0\tau_n ).
\end{align}
Setting $\lambda_n=\left\| e_{n} \right\|^2; p_n=\frac{\epsilon_n}{1+2\epsilon_n}$ and $b_n=b_{1n}+b_{2n}+b_{3n}$, where 
\begin{align*}
&b_{1n}=\frac{4(1+\epsilon_n)}{\epsilon_n} \left(\frac{\alpha_{n+1}-\alpha_n}{\alpha_n}\right)^2\left\| \widehat{x}^* \right\|^2; \\
&b_{2n}=\frac{2c_2\left(1+\epsilon_n\right) \tau_n}{1+2\epsilon_n}\varphi_R^{-1}\left(6\alpha_n \left\| \widehat{x}^* \right\|^2\right)h_{X}(k_0\tau_n );\\
&b_{3n}=\frac{4c_2\left(1+\epsilon_n\right) \tau_n\alpha_n}{N(1+2\epsilon_n)} \left\|\widehat{x}^* \right\|h_{X}(k_0\tau_n ).
\end{align*}
We can rewrite $(\ref{eq:29})$ as $\lambda_{n+1}\le \left(1-p_n\right)\lambda_{n}+b_n$. Clearly, $\lambda_n , b_n \ge 0; p_n \in \left(0;1\right)$ and $p_n \to 0$ as $n \to +\infty$.

Since $p_n=\frac{\epsilon_n}{1+2\epsilon_n}$ and $\epsilon_n \to 0$ as $n\to+\infty$, the series $\sum_{n=1}^{\infty} p_n = +\infty$ if and only if $\sum_{n=1}^{\infty} \epsilon_n = +\infty$. The last fact is equivalent to the assumption $\sum_{n=1}^{\infty}\frac{\alpha_n}{\gamma_n}=+\infty$.\\
 By the assumption ii), $\frac{b_{1n}}{p_n}= 4\left(1+\epsilon_n\right)\left(1+2\epsilon_n\right)\left(\frac{\alpha_{n+1}-\alpha_n}{\epsilon_n \alpha_n}\right)^2\left\| \widehat{x}^* \right\|^2 \to 0$ as $n\to +\infty$. \\
Further, using assumption iii), we will show that the expression
\begin{equation}\label{eq:28a}
 \frac{b_{2n}}{p_n}=2c_2N\left(1+\epsilon_n\right)\frac{\varphi_R^{-1}\left(6\alpha_n \left\| \widehat{x}^* \right\|^2\right)h_{X}(k_0\tau_n )}{\alpha_n}
\end{equation}
will tend to zero as $n\to +\infty$. We first prove that there exist positive integers $m$ and $n_0$, such that for all $n \geq n_0,   h_X(k_0\tau_n) \leq \frac{5^m}{k_0} h_X(\tau_n).$  Indeed, according to [10, Lemma 1, page 65], we have
$$2 \leq \lim_{\tau \to 0^+}\sup\frac{\rho_X(2\tau)}{\rho_X(\tau)} \leq 4. $$
Hence, there exists $\tau_0 > 0,$ such that $\frac{\rho_X(2\tau)}{\rho_X(\tau)} \leq 5$ for all $\tau \leq \tau_0.$ 
Since $\tau_n \to 0$ as $n \to + \infty,$ we can find a number $n_0$ such that $k_0 \tau_n \leq \tau_0$ for all $n \geq n_0.$ Let $m$ be a sufficiently large positive integer, such that $2^m \geq k_0.$ Then for all $n \geq n_0$ we have $\rho_X(k_0 \tau_n) = \rho_X(2 \frac{k_0 \tau_n}{2^1}) \leq 5 \rho_X(  \frac{k_0 \tau_n}{2^1}) = 5 \rho_X( 2 \frac {k_0 \tau_n}{2^2}) \leq 5^2 \rho_X( \frac {k_0 \tau_n}{2^2})  \leq \ldots \leq 5^m \rho_X( \frac{k_0 \tau_n}{2^m}).$ Because of the convexity of $\rho_X$ and $\frac{k_0}{2^m} \leq 1,$ we get $\rho_X (k_0 \tau_n) \leq 5^m \rho_X(\frac{k_0 \tau_n}{2^m}) \leq 5^m \rho_X(\tau_n).$ Thus, we come to the relation $h_X(k_0 \tau_n) = \frac {\rho_X(k_0 \tau_n)}{k_0 \tau_n} \leq \frac{5^m}{k_0} \frac {\rho_X(\tau_n)}{\tau_n}= \frac {5^m}{k_0}h_X(\tau_n).$\\
 Now using the last inequality and taking into account the fact that $\varphi_R^{-1}(t)$ is an increasing function and $R_1 := \frac{3}{2}R^2 \geq 6||\widehat{x}^*||^2,$ we can estimate the expression  $(\ref{eq:28a})$ as $\frac{b_{2n}}{p_n} \leq \frac{2c_2N 5^m (1 +\epsilon_n)}{k_0} \frac{\varphi_R^{-1}(R_1 \alpha_n) h_X(\tau_n)}{\alpha_n}$ for all $n \geq n_0.$ The assumption iii) implies that $\frac{b_{2n}}{p_n} \to  0$ as $n \to +\infty.$
\\Finally, the uniform smoothness of $X$ gives $$\frac{b_{3n}}{p_n}=4c_2\left(1+\epsilon_n\right)\left\|\widehat{x}^* \right\|h_{X}(k_0\tau_n )\to 0$$ as $n\to +\infty$. Thus, $\frac{b_n}{p_n} \to 0 (n\to +\infty)$. Lemma $\ref{lem.sequence}$ ensures that $\lambda_n =\left\| e_n \right\|^2 =\left\|x_n - x_n^*\right\|^2 \to 0 (n\to +\infty)$. Besides, by Lemma $\ref{lem.regu.solution}$, $x_n^* \to \widehat{x}^* (n\to +\infty)$, hence $\left\| x_n - \widehat{x}^* \right\| \le \left\|x_n - x_n^*\right\| + \left\|x_n^* -\widehat{x}^* \right\| \to 0 (n\to +\infty)$. The proof of Theorem $\ref{theo.imp.convergence}$ is complete.
\end{proof}
\begin{example}\label{eg4}
{\rm Let $A_i,    i=1,\ldots,N$ be $c-$ inverse strongly monotone operators on a real Hilbert space $X$. Then all the conditions A1-A3 and B1-B2 are satisfied. Further, since $\varphi(s,t) = ct^2,$ the function $\frac{\varphi(s,t)}{t}= ct$ is coercive. Conditions i), ii) on the parameters $\alpha_n,  \gamma_n $ have been already stated in [5, Theorem 2.1].
On the other hand,  for a Hilbert space,    $\rho_{X}(t)=\sqrt{1+t^2}-1\le \frac{t^2}{2}$, hence the assumption  iii) of Theorem $\ref{theo.imp.convergence}$ leads to the additional constraint $\gamma_n\alpha_n^{1/2} \to +\infty \quad (n \to +\infty).$}
\end{example}
An example of such a pair of parameters could be $\alpha_n = (n+1)^{-p},$ where, $0<p<1/2$ and $ \gamma_n = (n+1)^{1/2}.$ \\
In the next two examples we suppose that  $X= l^p,  1 \leq p < +\infty$ and  $ A_i = I - T_i,$ where $T_i : X \to X,     i=1,\ldots,N,$ are nonexpansive operators. In this case both sets of conditions A1-A3 and B1-B2 are fulfilled. Observe that for proving the $m$- accretiveness of $A_i$ one shoud use the identity $A_i + \alpha I = (1+\alpha)\{ I - (1+\alpha)^{-1}T_i\}$ and the fact that $(1+\alpha)^{-1}T_i$ is a contraction for $i=1,\ldots,N.$
\begin{example}\label{eg5}
{\rm Let $X= l^p$ with $p \geq 2,$ then $\rho_{X}(t)\le (p-1)t^2$ and $h_{X}(t)\le (p-1)t$ \quad (see \cite{AR2006}, page 48). According to Example $\ref{eg3}$, all the operators $A_i:=I-T_i,i=1,2,\ldots,N,$ are inverse uniformly accretive with $\varphi(s,t)=\frac{1}{Lp8^p}\frac{t^p}{s^{p-2}}. $  For any fixed $s>0,  \varphi_s^{-1}(t)=c(s)t^{\frac{1}{p}},$  where $c(s)$ is a positive constant,  and  the function $\frac{\varphi(s,t)}{t}$ is coercive in $t$. The assumption iii) of Theorem $\ref{theo.imp.convergence}$ becomes $\gamma_n \alpha_n^{\frac{p-1}{p}}\to \infty\quad (n\to +\infty)$ and we can choose $\alpha_n=(n+1)^{-k},\gamma_n=(n+1)^{1/2}$ with $0<k<\frac{1}{2}$.}
\end{example}
\begin{example}\label{eg6}
{\rm Suppose $X = l^p,     1<p < 2$, then we have (see \cite{AR2006}, page 48)
$\rho_X (t) \le \frac{t^{p}}{p}, h_{X}\left(t\right) \le \frac{t^{p-1}}{p}.$ Example $\ref{eg3}$ shows that $A_i,  i=1,\ldots, N$ are $c-$ inverse strongly accretive operators with $\varphi(s,t)=ct^2,   \varphi_s^{-1}(t)=\frac{\sqrt{t}}{\sqrt{c}},  \quad  c=\frac{p-1}{256L}$ 
and the assumption iii) of Theorem $\ref{theo.imp.convergence}$ becomes 
$\alpha_n^{1/2}\gamma_n^{p-1} \to \infty\quad (n\to +\infty).$ We can chose 
$\alpha_n=(n+1)^{-k},\gamma_n=(n+1)^{1/2}$ with $0<k<\min \{\frac{1}{2},p-1\}$.}
\end{example}
\indent Next we turn to the noisy data case. Assume that $A_i(x) :=F_i(x)-f_i$ and the exact operators $F_i(x),   i=1,\ldots, N,$ are inverse uniformly accretive. Suppose that instead of the exact data $(F_i,f_i)$, we have only noisy ones $(F_{n,i},f_{n,i})$, where the perturbed operators  $F_{n,i}:D(F_{n,i})=X\to X$ are just accretive for all $ n \geq 1$ and $i = 1, \ldots, N.$ Moreover, let
\begin{align}
&\left\|F_{n,i}(x)-F_i (x)\right\|\le h_n g(\left \|x\right\|) \label{eq:36},\\ 
& \left \|f_{n,i}-f_i\right\| \le \delta_n,  \quad i=1,\ldots,N \label{eq:37},
\end{align}
where, $g(t)$ is a nonnegative continuous nondecreasing function, $h_n>0,\delta_n>0$ for all $n>0$. Starting from arbitrary    $z_0\in X$,   we perform the following implicit PIRM:
\begin{align}
&A_{n,i}(z_n^i)+(\frac{\alpha_n}{N}+\gamma_n)z_n^i=\gamma_n z_n, \quad i=1,2,\ldots,N, \label{eq:38}\\
&z_{n+1}=\frac{1}{N}\sum_{i=1}^Nz_n^i, \quad n = 0, 1, \ldots, \label{eq:39}
\end{align}
where $A_{n,i}(x):=F_{n,i}(x)-f_{n,i},i=1,2,\ldots,N$.

\begin{theorem}\label{theo.convergence.perturb}
Assume that all the conditions of Theorem $\ref{theo.imp.convergence}$ and relations $\left(\ref{eq:36}\right)$, $\left(\ref{eq:37}\right)$ are fulfilled. If in addition $\frac{h_n+\delta_n}{\alpha_n} \to 0$ as $n\to +\infty$, then the sequence $\left\{z_n\right\}$ generated by $\left(\ref{eq:38}\right),\left(\ref{eq:39}\right)$ converges strongly to $\widehat{x}^*$ as $n\to +\infty$.
\end{theorem}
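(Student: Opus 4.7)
The approach is to mirror the proof of Theorem \ref{theo.imp.convergence}, treating the operator and data perturbations as an additional source term in the iteration and controlling the extra error by the assumption $(h_n+\delta_n)/\alpha_n\to 0$. Introducing the defect
\[
\eta_n^i:=F_{n,i}(z_n^i)-f_{n,i}-F_i(z_n^i)+f_i,\qquad \|\eta_n^i\|\le h_n\,g(\|z_n^i\|)+\delta_n,
\]
one rewrites (\ref{eq:38}) as
\[
A_i(z_n^i)+\bigl(\tfrac{\alpha_n}{N}+\gamma_n\bigr)z_n^i=\gamma_n z_n-\eta_n^i,
\]
which is precisely the unperturbed iteration (\ref{eq:12}) with an extra forcing term. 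The plan is then to carry out the three familiar steps: prove boundedness of $\{z_n\}$, derive a Lyapunov-type recursion for $\lambda_n:=\|z_n-x_n^*\|^2$ with an additional noise term, and apply Lemma \ref{lem.sequence}.

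For boundedness I would follow Lemma \ref{lemma.imp.boundedness}. The same inductive argument on a closed ball $B_r(\widehat x^*)$ now yields
\[
\bigl(\tfrac{\alpha_n}{N}+\gamma_n\bigr)\|z_n^i-\widehat x^*\|\le\gamma_n\|z_n-\widehat x^*\|+\tfrac{\alpha_n}{N}\|\widehat x^*\|+h_n\,g(\|\widehat x^*\|)+\delta_n,
\]
so the perturbation introduces an extra increment of order $(h_n+\delta_n)/(\tfrac{\alpha_n}{N}+\gamma_n)$; the hypothesis $(h_n+\delta_n)/\alpha_n\to 0$ guarantees that for all large $n$ this can be absorbed into a slight enlargement of $r$, closing the induction. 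With $\|z_n^i\|\le C$ in hand, the nondecreasing $g$ yields $\|\eta_n^i\|\le\mu_n:=h_n\,g(C)+\delta_n$ where $\mu_n/\alpha_n\to 0$.

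Next I would redo the calculations (\ref{eq:17})--(\ref{eq:29}) verbatim with $e_n,e_n^i$ replaced by $\tilde e_n=z_n-x_n^*$ and $\tilde e_n^i=z_n^i-x_n^*$. The only new term on the right-hand side of the analogue of (\ref{eq:19*}) is $-2\tau_n\sum_{i=1}^N\langle\eta_n^i,J(\tilde e_n^i)\rangle$, which by Step 1 is bounded by $2NC'\tau_n\mu_n$ for a suitable constant $C'$. Propagating it through (\ref{eq:28}) gives the same recursion as (\ref{eq:29}) with one extra nonnegative contribution $b_n^{(4)}=O(\tau_n\mu_n)$. For the final application of Lemma \ref{lem.sequence}, the terms $b_{1n},b_{2n},b_{3n}$ are handled exactly as in Theorem \ref{theo.imp.convergence}, whereas
\[
\frac{b_n^{(4)}}{p_n}=O\!\left(\frac{\tau_n\mu_n}{\epsilon_n}\right)=O\!\left(\frac{N\mu_n}{\alpha_n}\right)\longrightarrow 0.
\]
Hence $\|z_n-x_n^*\|\to 0$, and combining with $x_n^*\to\widehat x^*$ from Lemma \ref{lem.regu.solution}(iii) yields $z_n\to\widehat x^*$.

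I expect the boundedness step to be the main nuisance, because the perturbed operators $F_{n,i}$ are only assumed accretive (not $\varphi$-inverse uniformly accretive), and the bound $h_n\,g(\|z_n^i\|)+\delta_n$ on $\|\eta_n^i\|$ involves the very norm one is trying to control. One therefore has to either bootstrap inductively from an index $n_0$ beyond which the noise is already small, or fix a sufficiently generous radius at the outset and verify that the induction step survives the perturbation. Once this is settled, the rest is a routine re-run of the proof of Theorem \ref{theo.imp.convergence} with one additional, explicitly controlled noise term.
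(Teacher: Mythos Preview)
Your approach is workable but takes a substantially longer route than the paper's. You compare $z_n$ with the regularized solution $x_n^*$ and re-run the entire machinery of Theorem~\ref{theo.imp.convergence} with an extra noise term; the paper instead compares $z_n$ with the \emph{unperturbed iterate} $x_n$ from (\ref{eq:12})--(\ref{eq:13}). Subtracting (\ref{eq:12}) from (\ref{eq:38}) and using the accretiveness of $F_{n,i}$ (not of $A_i$) with the splitting $F_{n,i}(z_n^i)-F_{n,i}(x_n^i)+[F_{n,i}(x_n^i)-F_i(x_n^i)]+[f_i-f_{n,i}]$ yields directly
\[
\bigl(\tfrac{\alpha_n}{N}+\gamma_n\bigr)\|z_n^i-x_n^i\|\le \gamma_n\|z_n-x_n\|+h_n\,g(\|x_n^i\|)+\delta_n,
\]
and since $\{x_n^i\}$ is already bounded by Lemma~\ref{lemma.imp.boundedness}, one immediately obtains a clean linear recursion $\lambda_{n+1}\le(1-p_n)\lambda_n+b_n$ for $\lambda_n=\|z_n-x_n\|$ with $p_n=\alpha_n/(\alpha_n+N\gamma_n)$ and $b_n/p_n=O((h_n+\delta_n)/\alpha_n)\to 0$. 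Lemma~\ref{lem.sequence} then gives $\|z_n-x_n\|\to 0$, and Theorem~\ref{theo.imp.convergence} finishes the job. This bypasses both the boundedness headache you flagged and the need to replay the estimates (\ref{eq:17})--(\ref{eq:29}).

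On your boundedness step specifically: the displayed inequality with $h_n\,g(\|\widehat x^*\|)$ does \emph{not} follow from your decomposition $A_i(z_n^i)+(\tfrac{\alpha_n}{N}+\gamma_n)z_n^i=\gamma_n z_n-\eta_n^i$, which would produce $h_n\,g(\|z_n^i\|)$ and the circularity you worry about. It \emph{does} hold if you instead use the accretiveness of $F_{n,i}$ directly and evaluate the perturbation at $\widehat x^*$ (via $A_{n,i}(\widehat x^*)=F_{n,i}(\widehat x^*)-f_{n,i}$ with $F_i(\widehat x^*)=f_i$); with that fix, choosing $r\ge\|\widehat x^*\|+N\mu_n/\alpha_n$ for large $n$ closes the induction cleanly. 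So your sketch can be made rigorous, but the paper's comparison with $x_n$ rather than $x_n^*$ is both shorter and sidesteps the issue entirely.
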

\begin{proof}
Let $\{x_n\}$ be the sequence of approximations defined by $(\ref{eq:12})$, $(\ref{eq:13})$. From $(\ref{eq:12})$ and $(\ref{eq:38})$, we have

$$ A_{n,i}(z_n^i)-A_{i}(x_n^i)+(\frac{\alpha_n}{N}+\gamma_n)\left(z_n^i-x_n^i\right)=\gamma_n (z_n-x_n),$$
or
\begin{align*}
\left(F_{n,i}(z_n^i)-F_{n,i}(x_n^i)\right) +\left(F_{n,i}(x_n^i)-F_{i}(x_n^i)\right) +\left(f_{n,i}-f_i\right)&+(\frac{\alpha_n}{N}+\gamma_n)\left(z_n^i-x_n^i\right) \\ 
&=\gamma_n (z_n-x_n).
\end{align*}
Therefore
\begin{align}
\langle F_{n,i}(z_n^i)&-F_{n,i}(x_n^i), J(z_n^i-x_n^i)\rangle +\left\langle F_{n,i}(x_n^i)-F_{i}(x_n^i),J(z_n^i-x_n^i)\right\rangle  \notag\\ 
& +\left\langle f_{n,i}-f_i,J(z_n^i-x_n^i)\right\rangle+(\frac{\alpha_n}{N}+\gamma_n)\left\|z_n^i-x_n^i\right\|^2 \notag\\ 
&=\gamma_n\left\langle z_n-x_n,J(z_n^i-x_n^i)\right\rangle \label{eq:40}
\end{align}
By relations $(\ref{eq:36}), (\ref{eq:37})$ and $(\ref{eq:40})$, we get
\begin{equation*}\label{eq:41}
(\frac{\alpha_n}{N}+\gamma_n)\left\|z_n^i-x_n^i\right\|^2 \le h_n g(\left \|x_n^i\right\|)\left\|z_n^i-x_n^i\right\|+\delta_n \left\|z_n^i-x_n^i\right\|+\gamma_n\left\|z_n-x_n\right\|\left\|z_n^i-x_n^i\right\|
\end{equation*}
Lemma $\ref{lemma.imp.boundedness}$ ensures the boundedness of the sequence $\left\{x_n^i\right\}$. Thus,  $\left\|x_n^i\right\| \le R$ for some $R>0$. Setting $\lambda_n=\left\|z_n-x_n \right\|$, from the last inequality we find
\begin{equation}\label{eq:42}
\left\|z_n^i-x_n^i\right\|\le \frac{N\gamma_n}{\alpha_n+N\gamma_n}\lambda_n+\frac{Ng(R)h_n}{\alpha_n+N\gamma_n}+\frac{N\delta_n}{\alpha_n+N\gamma_n}
\end{equation}
On account of $(\ref{eq:13}),(\ref{eq:39})$ and $(\ref{eq:42})$, we have
\begin{equation}\label{eq:43}
\lambda_{n+1}=\left\|z_{n+1}-x_{n+1}\right\|\le \frac{1}{N}\sum_{i=1}^N \left\|z_n^i-x_n^i\right\|\le \frac{N\gamma_n}{\alpha_n+N\gamma_n}\lambda_n+\frac{Ng(R)h_n}{\alpha_n+N\gamma_n}+\frac{N\delta_n}{\alpha_n+N\gamma_n}.
\end{equation}
Putting $p_n=\frac{\alpha_n}{\alpha_n+N\gamma_n}, b_n=\frac{Ng(R)h_n}{\alpha_n+N\gamma_n}+\frac{N\delta_n}{\alpha_n+N\gamma_n}$, from $(\ref{eq:43})$ we get $\lambda_{n+1}=(1-p_n)\lambda_{n}+b_n$. By virtue of the hypotheses of Theorem $\ref{theo.convergence.perturb}$, we have $\sum_{n=1}^\infty p_n =+\infty, \frac{b_n}{p_n}\to 0$ as $n\to +\infty$. Lemma $\ref{lem.sequence}$ implies that $\lambda_n=\left\|z_n-x_n \right\| \to 0$ as $n\to +\infty$. Finally, by Theorem $\ref{theo.imp.convergence}$, $x_n\to \widehat{x}^*\quad (n\to +\infty)$, hence $\left\|z_n-\widehat{x}^*\right\|\le \left\|z_n-x_n \right\| +\left\|x_n-\widehat{x}^* \right\|\to 0 (n\to +\infty)$. The proof of Theorem $\ref{theo.convergence.perturb}$ is complete.
\end{proof}

We now consider an explicit PIRM for solving system $(\ref{eq1})$, consisting of synchronous computation of intermediate approximations $z_{ni}$
\begin {equation}\label{eq:c21}
z_{ni}=z_n-\frac{1}{\gamma_n}\left\{A_i\left(z_n\right)+\frac{\alpha_n}{N}z_n\right\}=z_n-\tau_n\left\{A_i\left(z_n\right)+\frac{\alpha_n}{N}z_n\right\}, i=1,2,\ldots,N,
\end{equation}
and defining the next approximation $z_{n+1}$ as an average of intermediate approximations $z_{ni}$ 
\begin{equation}\label{eq:c22}
z_{n+1}=\frac{1}{N}\sum_{i=1}^N z_{ni},n=1,2,\ldots.
\end{equation}
\begin{lemma}\label{theo.exp.boundedness}
Suppose conditions B1-B2 are satisfied. Assume in addition the function $\frac{\varphi(s,t)}{t}$ is coercive in $t$ for every fixed $s>0.$
 Let $\left\{\alpha_n\right\}$ and $\left\{\gamma_n\right\}$ be positive sequences such that for all $n \geq 0$, $\alpha_n \le 1$, $ \gamma_n \ge 1,$ and
\begin{equation}\label{eq:c23}
\tau_n \le d, \quad \frac{\rho_X \left(\tau_n\right)}{\tau_n \alpha_n} \le d^2,
\end{equation}
where $\tau_n := 1/\gamma_n$ and $d \in (0,1)$  is a fixed number. Then starting from arbitrary $z_0 \in X,$ the sequence $\left\{z_n\right\}$ generalized by $(\ref{eq:c21})$, $\left(\ref{eq:c22}\right)$ is bounded.
\end{lemma}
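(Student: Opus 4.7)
The plan is to prove by induction that $\|z_n - \widehat{x}^*\| \le r$ for all $n \ge 0$, where $\widehat{x}^* \in S$ is a fixed solution of $(\ref{eq1})$ and $r>0$ is a sufficiently large constant chosen at the end. Averaging the parallel steps $(\ref{eq:c21})$ yields the compact form
\[ z_{n+1} - z_n = -\frac{\tau_n}{N}\bigl[A(z_n) + \alpha_n z_n\bigr], \]
with $A := \sum_{i=1}^{N}A_i$ accretive and $A(\widehat{x}^*)=0$. Under the induction hypothesis $\|z_n\| \le r + \|\widehat{x}^*\|$, and the coercivity of $\varphi(s,t)/t$ in $t$ combined with the local-boundedness argument already used in the proof of Theorem~\ref{theo.imp.convergence} produces a uniform bound $\|A_i(z_n)\| \le M(r)$. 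Together with $\alpha_n \le 1$, this gives the step-size bound $\|z_{n+1} - z_n\| \le \tau_n K(r)$ with $K(r) := M(r) + (r+\|\widehat{x}^*\|)/N$.

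Set $u_n := z_n - \widehat{x}^*$ and apply Lemma~\ref{lem.ineqconvex} to $(u_{n+1}, u_n)$:
\[ \|u_{n+1}\|^2 \le \|u_n\|^2 + 2\langle z_{n+1}-z_n, J(u_n)\rangle + 2\langle z_{n+1}-z_n, J(u_{n+1}) - J(u_n)\rangle. \]
Substituting the formula for $z_{n+1}-z_n$ into the first inner product, discarding the $A$-contribution by accretiveness, and using $\langle z_n, J(u_n)\rangle \ge \|u_n\|^2 - \|\widehat{x}^*\|\,\|u_n\|$ bounds this term by $-\tfrac{2\tau_n\alpha_n}{N}\bigl[\|u_n\|^2 - \|\widehat{x}^*\|\,\|u_n\|\bigr]$.

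For the second cross term, Lemma~\ref{lem.geometry2} applied with $R_0 := r + dK(r)$ (which dominates both $\|u_n\|$ and $\|u_{n+1}\|$ because $\tau_n \le d$) gives
\[ \langle z_{n+1}-z_n, J(u_{n+1}) - J(u_n)\rangle \le 2LR_0^2\, \rho_X\!\bigl(4\|z_{n+1}-z_n\|/R_0\bigr). \]
Bounding $\|z_{n+1}-z_n\| \le \tau_n K(r)$, invoking the scaling $\rho_X(k\tau_n) \le K_1\,\rho_X(\tau_n)$ for small $\tau_n$ (derived exactly as in the proof of Theorem~\ref{theo.imp.convergence} from $\limsup_{\tau\to 0^+}\rho_X(2\tau)/\rho_X(\tau) \le 4$), and using the parameter condition $\rho_X(\tau_n) \le d^2\,\tau_n\alpha_n$ from $(\ref{eq:c23})$, this remainder is controlled by $C_2(r)\,d^2\,\tau_n\alpha_n$.

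Combining the three bounds yields
\[ \|u_{n+1}\|^2 \le \|u_n\|^2 - \frac{2\tau_n\alpha_n}{N}\bigl[\|u_n\|^2 - \|\widehat{x}^*\|\,\|u_n\|\bigr] + C_3(r)\,d^2\,\tau_n\alpha_n. \]
A short case analysis on the position of $\|u_n\|$ relative to $\|\widehat{x}^*\|$ shows that the right-hand side is $\le r^2$ as soon as $r$ satisfies an inequality of the form $r^2 - \|\widehat{x}^*\|\,r \ge \tfrac{N}{2}\,C_3(r)\,d^2$, which, since $C_3(r)$ grows only polynomially in $r$, can be secured by taking $r$ large enough (depending on $d$ and the problem data). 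This closes the induction and proves boundedness. The main obstacle is the third step: controlling the Bregman-type remainder uniformly in $n$ at the sharp scale $\tau_n\alpha_n$; this is precisely where the coupling between $\rho_X(\tau_n)$ and $\tau_n\alpha_n$ imposed by $(\ref{eq:c23})$, together with the geometric inequalities of Lemmas~\ref{lem.ineqconvex} and~\ref{lem.geometry2}, becomes indispensable.
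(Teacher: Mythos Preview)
Your approach differs markedly from the paper's. The paper carries out no direct induction at all: after observing that averaging (\ref{eq:c21})--(\ref{eq:c22}) collapses the scheme to the single-operator iteration $z_{n+1}=z_n-\frac{1}{N\gamma_n}\{A(z_n)+\alpha_n z_n\}$ with $A=\sum_{i=1}^N A_i$ bounded, continuous and $m$-accretive, it simply invokes \cite[Theorem~5.1]{ACZ2004} to conclude boundedness. Your sketch is, in effect, an attempt to reprove that external theorem by hand.

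There is, however, a genuine gap in your closing step. The constant $C_3(r)$ produced by Lemma~\ref{lem.geometry2} carries the factor $R_0^{2}$ with $R_0=r+dK(r)\ge r$, so $C_3(r)$ grows \emph{at least} quadratically in $r$ (and faster still if $M(r)$ is superlinear in $r$, which the coercivity argument for $\varphi$ does not rule out). Consequently the inequality you need at the boundary of the ball, namely $r^{2}-\|\widehat{x}^*\|\,r\ge \tfrac{N}{2}C_3(r)d^{2}$, is of the form $r^{2}-cr\ge c'r^{2}$ with $c'>0$ independent of $r$; this cannot be secured merely by taking $r$ large. Saying that $C_3(r)$ ``grows only polynomially'' is beside the point---you would need $C_3(r)=o(r^{2})$. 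The circularity (a larger $r$ inflates $R_0$, hence $C_3(r)$, hence demands a still larger $r$) is real and must be broken by a sharper argument, for instance by expressing the remainder bound in terms of $\|u_n\|$ rather than the a~priori radius $r$, so that the contraction term $-\tfrac{2\tau_n\alpha_n}{N}\|u_n\|^{2}$ can absorb it at the same scale. That is precisely the work carried out in \cite{ACZ2004}, which the paper elects to quote rather than reproduce.
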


\begin{proof}
A simple vertification shows that the sequence $\{z_n\}$ defined by  $(\ref{eq:c21})$ and $\left(\ref{eq:c22}\right)$ satisfies the relation
\begin{equation}\label{eq:c23a}
z_{n+1}=z_n - \frac{1}{N\gamma_n}\{A(z_n) + \alpha_n z_n\},
\end{equation}
where $A(z) := \sum_{i=1}^N A_i(z).$  By our assumptions, all the operators $A_i, i=1,\ldots,N,$ are continuous, $m$-accretive and $\varphi$-inverse uniformly accretive. Moreover, as it was shown in the proof of Theorem $\ref{theo.imp.convergence}$, $A_i$ is  bounded for every $i$, hence the operator $A: D(A) = X \to X$ is bounded, continuous and $m$-accretive. According to Lemma $\ref{lem.iquivalent}$, equation $A(z) = 0$  is equivalent to the consistent system $\left(\ref{eq1}\right)$. By [21, Theorem 5.1] the sequence $\{z_n\}$ defined by $(\ref{eq:c23a})$ is bounded.
\end{proof}

\begin{theorem}\label{theo.exp.convergence}
Assume that all the conditions of  Lemma $\ref{theo.exp.boundedness}$ are fulfilled. In addition, let $\alpha_n \to 0, \tau_n:= 1/\gamma_n \to 0$ as $n \to +\infty$, such that
\begin{equation}\label{eq:c24}
\sum_{i=1}^{\infty} \alpha_n \tau_n =+\infty,\quad \frac{\tau_n}{\alpha_n} \to 0 \quad \frac{\left|\alpha_n - \alpha_{n+1}\right|}{\tau_n \alpha_n^2}\to 0\quad \frac{\rho_X \left(\tau_n\right)}{\tau_n \alpha_n} \to 0.
\end{equation}Then the sequence $\left\{z_n\right\}$ generalized by $(\ref{eq:c21})$ and $\left(\ref{eq:c22}\right)$ converges to $\widehat{x}^*$ as $n\to \infty$.
\end{theorem}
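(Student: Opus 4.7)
The strategy is to parallel the proof of Theorem \ref{theo.imp.convergence}, adapted to the explicit scheme. Observe first that (\ref{eq:c21})--(\ref{eq:c22}) collapse into the averaged recursion (\ref{eq:c23a}), namely $z_{n+1}=z_n-(\tau_n/N)\{A(z_n)+\alpha_n z_n\}$ with $A:=\sum_{i=1}^{N} A_i$. Let $x_n^*$ be the unique solution of (\ref{eq:regu.sum.equ}) and set $e_n:=z_n-x_n^*$. Lemma \ref{theo.exp.boundedness} gives the boundedness of $\{z_n\}$ and Lemma \ref{lem.regu.solution}(ii) that of $\{x_n^*\}$; in addition, the coercivity argument already used in the proof of Theorem \ref{theo.imp.convergence} yields a uniform bound $\|A(z_n)+\alpha_n z_n\|\le M$. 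In particular $\|z_{n+1}-z_n\|\le M\tau_n/N$, and all the quantities $\{e_n\}$ and $\{z_{n+1}-x_n^*\}$ lie in a common ball of radius $R$.

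The core estimate would come from applying Lemma \ref{lem.ineqconvex} with $x:=z_{n+1}-x_n^*$, $y:=e_n$, so that $x-y=z_{n+1}-z_n=-(\tau_n/N)\{A(z_n)-A(x_n^*)+\alpha_n e_n\}$ (rewritten via $A(x_n^*)+\alpha_n x_n^*=0$). The accretiveness of $A$ bounds the middle cross term $2\langle z_{n+1}-z_n,J(e_n)\rangle$ from above by $-(2\tau_n\alpha_n/N)\|e_n\|^2$; the second cross term $2\langle z_{n+1}-z_n,J(z_{n+1}-x_n^*)-J(e_n)\rangle$ is handled by Cauchy--Schwarz together with Lemma \ref{lem.ineqJ*}, which yields $\|J(z_{n+1}-x_n^*)-J(e_n)\|\le 8R\,h_X(16L\|z_{n+1}-z_n\|/R)$, producing an error of order $\rho_X(C\tau_n)$. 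The net intermediate inequality is
\[
\|z_{n+1}-x_n^*\|^2 \le \bigl(1-\tfrac{2\tau_n\alpha_n}{N}\bigr)\|e_n\|^2+C_1\rho_X(C_2\tau_n).
\]
Using $\|x_n^*-x_{n+1}^*\|\le 2\|\widehat{x}^*\|\,|\alpha_{n+1}-\alpha_n|/\alpha_n$ from Lemma \ref{lem.regu.solution}(iv) together with the elementary inequality $(a+b)^2\le(1+\eta)a^2+(1+\eta^{-1})b^2$ with $\eta:=\tau_n\alpha_n/N$, one upgrades this to a recursion $\|e_{n+1}\|^2\le(1-p_n)\|e_n\|^2+b_n$, where $p_n$ is asymptotically of order $\tau_n\alpha_n/N$ and $b_n$ is the sum of a $\rho_X(C_2\tau_n)$-term and a term proportional to $|\alpha_{n+1}-\alpha_n|^2/(\tau_n\alpha_n^3)$.

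The four hypotheses in (\ref{eq:c24}) are precisely what is required to invoke Lemma \ref{lem.sequence}: $\sum\alpha_n\tau_n=+\infty$ yields $\sum p_n=+\infty$; $\rho_X(\tau_n)/(\tau_n\alpha_n)\to 0$ kills the smoothness-error ratio in $b_n/p_n$; $|\alpha_{n+1}-\alpha_n|/(\tau_n\alpha_n^2)\to 0$ kills the parameter-shift ratio (after squaring); and $\tau_n/\alpha_n\to 0$ keeps the $(1+\eta)$ correction subleading so that the principal contractive factor $1-\tau_n\alpha_n/N$ survives. Hence $\|e_n\|\to 0$, and combined with $x_n^*\to\widehat{x}^*$ (Lemma \ref{lem.regu.solution}(iii)) this delivers $z_n\to\widehat{x}^*$. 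I expect the main technical hurdle to lie in the bookkeeping of the core estimate: extracting the sharp contractive factor $1-2\tau_n\alpha_n/N$ from accretivity while simultaneously controlling the duality-map cross term by $\rho_X(\tau_n)$ using only the uniform smoothness of $X$ and the $O(\tau_n)$ step size, and then juggling the three distinct error scales so that each decay rate in (\ref{eq:c24}) plays exactly the role it is designed for.
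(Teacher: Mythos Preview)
Your proposal is correct and follows the same overall architecture as the paper's proof: compare $z_n$ with the regularized solution $x_n^*$, extract the contractive factor $1-c\,\tau_n\alpha_n$ from accretivity via Lemma~\ref{lem.ineqconvex}, bound the duality-map cross term by a $\rho_X$-type quantity, absorb the drift $x_n^*\to x_{n+1}^*$ using Lemma~\ref{lem.regu.solution}(iv), and close with Lemma~\ref{lem.sequence}.

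The implementation differs in a few technical choices. The paper works at the level of the individual intermediate points $z_{ni}$, estimating $\sum_i\|z_{ni}-x_n^*\|^2$ and $\sum_i\|z_{ni}-x_{n+1}^*\|^2$ separately and passing to $z_{n+1}$ by convexity (\ref{eq:c37}); you instead exploit the collapse (\ref{eq:c23a}) and work directly with $z_{n+1}-x_n^*$, which is a bit more streamlined. For the duality cross term the paper invokes Lemma~\ref{lem.geometry1}, which produces both a $\rho_X(M\tau_n)$-term and an extra $\tau_n^2$-term in $b_n$; your route via Cauchy--Schwarz and Lemma~\ref{lem.ineqJ*} yields only the $\rho_X$-term. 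For the parameter shift the paper applies Lemmas~\ref{lem.ineqconvex} and~\ref{lem.geometry1} again to obtain a linear contribution $c_3|\alpha_{n+1}-\alpha_n|/\alpha_n$ in $b_n$, while your $(a+b)^2$ trick gives the squared version; either way $b_n/p_n\to 0$ under (\ref{eq:c24}).

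One small inaccuracy: with $\eta=\tau_n\alpha_n/N$ the $(1+\eta)$ correction is automatically subleading because $\tau_n\alpha_n\to 0$, so this is not what the hypothesis $\tau_n/\alpha_n\to 0$ is for. In the paper's argument that hypothesis is used precisely to kill the extra $\tau_n^2/(\tau_n\alpha_n)=\tau_n/\alpha_n$ ratio coming from Lemma~\ref{lem.geometry1}; in your variant it actually goes unused, which is harmless for proving the stated theorem.
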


\begin{proof}
Let $x^*_n$ be the unique solution of regularized equation $(\ref{eq:regu.sum.equ})$. It follows from Lemma $\ref{lem.regu.solution}$ that $\{x^*_n\}$ is bounded, hence there exists a constant $\tilde {d}>0$ such that 
$$ \left\|x^*_n - x^*_{n+1}\right\| \le \tilde{d}.$$
By Lemma $\ref{lem.ineqconvex}$, we have
\begin{align}\label{eq:c26}
\left\|z_{ni} - x^*_{n+1}\right\|^2 \le \left\|z_{ni} - x^*_{n}\right\|^2 \notag&+2\left\langle x^*_{n+1}- x^*_n ,J\left( x^*_{n} - z_{ni}\right) \right\rangle\\ 
&+2\left\langle x^*_{n+1}- x^*_n, J(x^*_{n+1}- z_{ni}) - J(x^*_{n} - z_{ni})\right\rangle.
\end{align}
Further, by Lemma $\ref{lem.geometry1}$, we get
\begin{align}\label{eq:c27}
\left\|z_{ni} - x^*_{n+1}\right\|^2 \le \left\|z_{ni} - x^*_{n}\right\|^2 \notag&+2\left\|z_{ni} - x^*_{n}\right\|. \left\|x^*_{n+1}- x^*_n\right\| \\ 
& + 16\left\|x^*_{n+1}- x^*_n\right\|^2 +c_1 \left(n\right) \rho_X \left(\left\|x^*_{n+1}- x^*_n\right\|\right),
\end{align}
where $c_1 (n) =8 \max\left\{2L,\left\|z_{ni} - x^*_{n+1}\right\| + \left\|z_{ni} - x^*_{n}\right\|\right\}$. Taking into account Lemma $\ref{theo.exp.boundedness}$ and the boundedness of the operators $A_i$ we conclude that the sequence $\left\{z_{ni}\right\}$ is also bounded, therefore there exist positive numbers $c_1, k_0$ such that $c_1 (n)\le c_1 $ and $\left\|z_{ni} - x^*_{n}\right\| \le k_0$ for all $n\ge 0$. Note that, if $H$ is a Hilbert space, then for all $0<\tau <\widehat{\tau}$ 
\begin{equation}\label{eq:c28}
\rho_X \left(\tau\right) \ge \rho_H \left(\tau\right) =\sqrt{1+\tau^2} -1 \ge \widehat{c}\tau^2,
\end{equation}
where $\widehat{c}=\left(\sqrt{1+\widehat{\tau}^2} +1\right)^{-1}$. \\
Now, summing up both sides of $(\ref{eq:c27})$ for $i=1,2 \ldots, N,$ and using Lemma $\ref{lem.geometry2}$, as well as inequality $(\ref{eq:c28})$ with $\widehat{\tau}:=\tilde{d} \ge \tau:=\left\|x^*_{n+1}- x^*_n\right\|$, we obtain
\begin{align*}
\sum_{i=1}^N\left\|z_{ni} - x^*_{n+1}\right\|^2 \le \sum_{i=1}^N\left\|z_{ni} - x^*_{n}\right\|^2\notag&+4Nk_0\frac{\left|\alpha_{n+1}-\alpha_n\right|}{\alpha_n}\left\|\widehat{x}^*\right\|\\
&+N\left(16\widehat{c}^{-1}+c_1\right) \rho_X \left(2\frac{\left|\alpha_{n+1}-\alpha_n\right|}{\alpha_n}\left\|\widehat{x}^*\right\|\right).
\end{align*}
\\
From the last inequality and the fact that $\rho_X \left(\tau\right) \le \tau$, one gets
\begin{equation}\label{eq:c30}
\sum_{i=1}^N\left\|z_{ni} - x^*_{n+1}\right\|^2 \le \sum_{i=1}^N\left\|z_{ni} - x^*_{n}\right\|^2 +c_3\frac{\left|\alpha_{n+1}-\alpha_n\right|}{\alpha_n},
\end{equation}
where $c_3=2N\left\|\widehat{x}^*\right\|\left(2k_0+16\widehat{c}^{-1}+c_1\right)$.\\
Next, we shall estimate the expression $\sum_{i=1}^N\left\|z_{ni} - x^*_{n}\right\|^2$. By Lemma $\ref{lem.ineqconvex}$ and $(\ref{eq:c21})$, we have
\begin{align}\label{eq:c31}
\left\|z_{ni} - x^*_{n}\right\|^2\notag &=\left\|z_{n} - x^*_{n} -\tau_n\left\{A_i\left(z_n\right)+\frac{\alpha_n}{N}z_n\right\}\right\|^2\\
\notag & \le \left\|z_{n} - x^*_{n}\right\|^2-2\tau_n \left\langle A_i\left(z_n\right)+\frac{\alpha_n}{N}z_n, J(z_{n} - x^*_{n})\right\rangle\\
&+2\left\langle z_{ni} - z_{n},J(z_{ni} - x^*_{n})-J(z_{n} - x^*_{n})\right\rangle.
\end{align}
Besides,
\begin{equation}\label{eq:c32}
\left\|z_{ni} - z_{n}\right\|=\tau_n\left\|A_i\left(z_n\right)+\frac{\alpha_n}{N}z_n\right\| \le M\tau_n,
\end{equation}
where $M=\sup \left\{\left\|A_i\left(z_n\right)+\frac{\alpha_n}{N}z_n\right\|: i=1,2,\ldots,N; n=1,2,\ldots\right\}$. Using $(\ref{eq:c32})$ and Lemma $\ref{lem.geometry1}$, one obtains
\begin{equation*}\label{eq:c33}
\left\langle z_{ni} - z_{n},J(z_{ni} - x^*_{n})-J(z_{n} - x^*_{n})\right\rangle \le 8M^2\tau_n^2 +c_2(n) \rho_X \left(M\tau_n\right),
\end{equation*}
where $c_2(n)=4 \max\left\{2L,\left\|z_{ni} - x^*_{n}\right\| + \left\|z_{n} - x^*_{n}\right\|\right\} \le c_2$ because of the boundedness of the sequences $\left\{z_{ni}\right\},\left\{z_{n}\right\}$ and $\left\{x^*_{n}\right\}$. Therefore
\begin{equation}\label{eq:c34}
\left\langle z_{ni} - z_{n},J(z_{ni} - x^*_{n})-J(z_{n} - x^*_{n})\right\rangle \le 8M^2\tau_n^2 +c_2\rho_X \left(M\tau_n\right).
\end{equation}
On the other hand, since the operators $A_i$ are accretive and $x^*_n$ is the solution of $(\ref{eq:regu.sum.equ})$, we have
\begin{align}\label{eq:c35}
\sum_{i=1}^N \langle A_i\left(z_n\right)+\frac{\alpha_n}{N}z_n,\notag& J(z_{n} - x^*_{n})\rangle =\sum_{i=1}^N \left\langle A_i\left(z_n\right)-A_i\left(x^*_n\right), J(z_{n} - x^*_{n})\right\rangle \\
\notag&+\left\langle\sum_{i=1}^N A_i\left(x^*_n\right) +\alpha_n x^*_n, J(z_{n} - x^*_{n})\right\rangle +\alpha_n\left\|z_{n} - x^*_{n}\right\|^2 \\
&\ge  \alpha_n\left\|z_{n} - x^*_{n}\right\|^2.
\end{align}
Now, summing the both sides of $(\ref{eq:c31})$ for $i=1,2,\ldots,N$ and taking account relations $(\ref{eq:c34}),(\ref{eq:c35})$, we get
\begin{equation}\label{eq:c36}
\sum_{i=1}^N\left\|z_{ni} - x^*_{n}\right\|^2 \le N\left\|z_{n} - x^*_{n}\right\|^2-2\tau_n \alpha_n\left\|z_{n} - x^*_{n}\right\|^2
+16NM^2\tau_n^2 +2Nc_2\rho_X \left(M\tau_n\right).
\end{equation}
Note that 
\begin{equation}\label{eq:c37}
\left\|z_{n+1} - x^*_{n+1}\right\|^2 \le \frac{1}{N^2}\left(\sum_{i=1}^N\left\|z_{ni} - x^*_{n+1}\right\|\right)^2 \le \frac{1}{N}\sum_{i=1}^N\left\|z_{ni} - x^*_{n+1}\right\|^2.
\end{equation}
From $(\ref{eq:c30}),(\ref{eq:c36}),(\ref{eq:c37})$, we get
\begin{align} \label{eq:c38}
\left\|z_{n+1} - x^*_{n+1}\right\|^2 \le \left\|z_{n} - x^*_{n}\right\|^2 \notag&-\frac{2\tau_n \alpha_n}{N}\left\|z_{n} - x^*_{n}\right\|^2\\
&+16M^2\tau_n^2 +2c_2\rho_X \left(M\tau_n\right)+c_3\frac{\left|\alpha_{n+1}-\alpha_n\right|}{\alpha_n}.
\end{align}
Setting $\lambda_n=\left\|z_{n} - x^*_{n}\right\|^2,  \quad  p_n=\frac{2\tau_n \alpha_n}{N}, \quad  b_n=16M^2\tau_n^2 +2c_2\rho_X \left(M\tau_n\right)+c_3\frac{\left|\alpha_{n+1}-\alpha_n\right|}{\alpha_n},$  we can rewrite $(\ref{eq:c38})$ as
\begin{equation*}\label{eq:c39}
\lambda_{n+1}\le (1-p_n)\lambda_n +b_n.
\end{equation*}
In the same manner as in the proof of Theorem 2.1, we can find positive integers $n_0$ and $m$, such that for all $n \geq n_0,  \rho_X(M \tau_n) \leq 5^m \rho_X(\tau_n).$   By Lemma $\ref{lem.sequence}$ and the hypothesis $(\ref{eq:c24})$, we conclude that $\lambda_n=\left\|z_{n} - x^*_{n}\right\|^2\to 0$ as $n\to +\infty$. Finally, by Lemma $\ref{lem.regu.solution}$,
\begin{equation*}\label{eq:c40}
\left\|z_{n} - \widehat{x}^*\right\| \le \left\|z_{n} - x^*_{n}\right\|+\left\|x^*_{n} - \widehat{x}^*\right\|\to 0,
\end{equation*}
which implies that $\left\{z_n\right\}$ converges to $\widehat{x}^*$.The proof of Theorem $\ref{theo.exp.convergence}$ is complete.
\end{proof}
\section{EQUATIONS  WITH SMOOTH ACCRETIVE OPERATORS}
\setcounter{lemma}{0}
\setcounter{theorem}{0}
\setcounter{equation}{0}
In this section, for solving system $(\ref{eq1})$ with smooth accretive operators $A_i$, we consider a parallel regularized Newton-type method (cf. \cite{AC2011, AD2013})
\begin{align}
&A_i(x_n)+\frac{\alpha_n}{N} (x_n -x^0_i)+(A'_i(x_n)+\frac{\alpha_n}{N} I)(x_{n}^i-x_n)=0 \label{eq:s26},i=1,2,\ldots,N,\\
&x_{n+1}=\frac{1}{N}\sum_{i=1}^N x_n^i, n=0,1,\ldots. \label{eq:s27}
\end{align}
The following assumptions will be needed throughout Section 3.
\begin{itemize}
\item [C1.]   System $(\ref{eq1})$ possesses an exact solution $\widehat{x}^*.$  The operators $A_i \quad (i = 1,\ldots, N)$ are accretive on a real Banach space $X$ and Fr$\acute{\rm e}$chet differentiable in a closed ball $B_r(\widehat{x}^*) \subset X$ with center $\widehat{x}^*$ and radius $r >0$. Moreover,
\begin{equation*}\label{eq:s25*}
\left\|A'_i(x)-A'_i(y)\right\| \le K\left\|x-y\right\|\,\, \forall x,y \in B_r(\widehat{x}^*),  i=1,\ldots,N. 
\end{equation*}
\item[C2.] The following componentwise source condition  (see \cite{AD2013,BK2006})   holds
\begin{equation*}\label{eq:s26*}
x_i^0-\widehat{x}^* =A'_i (\widehat{x}^*)v_i, 
\end{equation*}
where $x_i^0 \in B_r(\widehat{x}^*),\, v_i \in X, 1\le i\le N$.
\item[C3.]  The parameters $\alpha_n$ are chosen such that
\begin{equation*}\label{eq:s27*}
\alpha_n>0, \alpha_n \to 0, 1\le \frac{\alpha_n}{\alpha_{n+1}} \le \rho,
\end{equation*}
where the constant $\rho >1$.
\end{itemize}

\begin{theorem}\label{theo.exact.smooth}
Let all the assumptions ${\rm C1-C3}$ be satisfied. If  $\sum_{i=1}^N \left\|v_i\right\|$ is sufficiently small and $x_0$ is close enough to $\widehat{x}^*$, then there holds the estimate
\begin{equation}\label{eq:s28}
\left\|x_n -\widehat{x}^*\right\|=O(\alpha_n).
\end{equation}
\end{theorem}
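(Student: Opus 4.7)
The plan is an induction on $n$ showing that the normalized error $\omega_n := \|x_n - \widehat{x}^*\|/\alpha_n$ satisfies a quadratic recurrence to which Lemma $\ref{lem.sequence2}$ applies. First I would set $e_n := x_n - \widehat{x}^*$ and $e_n^i := x_n^i - \widehat{x}^*$, subtract the identity $A_i(\widehat{x}^*)=0$ from (3.1), and insert the componentwise source representation $x_i^0 - \widehat{x}^* = A_i'(\widehat{x}^*) v_i$ of C2. After collecting terms one arrives at the key identity
\[
\left(A_i'(x_n) + \tfrac{\alpha_n}{N} I\right) e_n^i \;=\; r_i(x_n) \;+\; \tfrac{\alpha_n}{N}\, A_i'(\widehat{x}^*) v_i,
\]
where $r_i(x_n) := A_i'(x_n) e_n - [A_i(x_n) - A_i(\widehat{x}^*)]$ is the Newton residual. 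Provided $x_n \in B_r(\widehat{x}^*)$, the Lipschitz assumption C1 together with the integral form of Taylor's remainder gives $\|r_i(x_n)\| \le (K/2)\|e_n\|^2$.

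Next I would solve for $e_n^i$ and estimate each piece via Lemma $\ref{lem.inequ.phu}$, which furnishes $\|(A_i'(x_n)+(\alpha_n/N)I)^{-1}\| \le N/\alpha_n$ and $\|(A_i'(x_n)+(\alpha_n/N)I)^{-1} A_i'(x_n)\| \le 2$. The delicate factor $(A_i'(x_n)+(\alpha_n/N)I)^{-1} A_i'(\widehat{x}^*)$ I would handle by the splitting $A_i'(\widehat{x}^*) = A_i'(x_n) + [A_i'(\widehat{x}^*) - A_i'(x_n)]$: the first summand contributes at most $2$ by Lemma $\ref{lem.inequ.phu}$, the second at most $(NK/\alpha_n)\|e_n\|$ by C1 and the resolvent bound. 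Assembling the two contributions to $e_n^i$ and averaging as in (3.2) yields the scalar estimate
\[
\|e_{n+1}\| \;\le\; \frac{NK}{2\alpha_n}\|e_n\|^2 + \frac{KV}{N}\|e_n\| + \frac{2\alpha_n V}{N^2},\qquad V := \sum_{i=1}^N \|v_i\|.
\]

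Dividing by $\alpha_{n+1}$ and exploiting $\alpha_n/\alpha_{n+1}\le\rho$ from C3 converts this into the quadratic recursion $\omega_{n+1} \le a + b\,\omega_n + c\,\omega_n^2$ with $a = 2\rho V/N^2$, $b = \rho KV/N$, $c = \rho NK/2$. For $V$ sufficiently small the condition $b + 2\sqrt{ac} < 1$ of Lemma $\ref{lem.sequence2}$ holds; requiring $x_0$ close enough to $\widehat{x}^*$ relative to $\alpha_0$ ensures $\omega_0 \le M_+$, and Lemma $\ref{lem.sequence2}$ then yields $\omega_n \le l := \max\{\omega_0, M_-\}$ for every $n$, that is $\|x_n - \widehat{x}^*\| \le l\,\alpha_n$, which is (3.3).

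The main obstacle I anticipate is not the recursion itself but the bookkeeping that maintains $x_n \in B_r(\widehat{x}^*)$ at every step, on which the Lipschitz bound on $A_i'$ rests. The thresholds on $V$ and on $\|x_0 - \widehat{x}^*\|$ must be chosen not only to enforce $b + 2\sqrt{ac} < 1$ and $\omega_0 \le M_+$, but also so that the resulting constant $l$ satisfies $l\,\alpha_0 \le r$; the monotonicity $\alpha_{n+1}\le\alpha_n$ implied by C3 then propagates $\|e_n\| \le l\alpha_n \le l\alpha_0 \le r$ to the next index, closing the induction. Apart from this admissibility check and the splitting trick for $(A_i'(x_n)+(\alpha_n/N)I)^{-1}A_i'(\widehat{x}^*)$, the argument is a clean two-step reduction to Lemmas $\ref{lem.inequ.phu}$ and $\ref{lem.sequence2}$.
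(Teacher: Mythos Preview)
Your proposal is correct and follows essentially the same argument as the paper's proof: the same error decomposition, the same use of Lemma~\ref{lem.inequ.phu} and the Lipschitz Taylor remainder, the same splitting idea for $(A_i'(x_n)+(\alpha_n/N)I)^{-1}A_i'(\widehat{x}^*)$, and the same reduction to the quadratic recursion of Lemma~\ref{lem.sequence2}, including the check that $l\alpha_0\le r$ keeps the iterates in $B_r(\widehat{x}^*)$. The only cosmetic differences are that the paper normalizes $\omega_n = N\|e_n\|/\alpha_n$ (so its constants $a,b,c$ differ from yours by powers of $N$) and bounds the source term via the resolvent identity rather than your direct additive splitting $A_i'(\widehat{x}^*)=A_i'(x_n)+[A_i'(\widehat{x}^*)-A_i'(x_n)]$, which in fact gives you a slightly sharper constant.
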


\begin{proof} We suppose by induction that $x_n \in B_r(\widehat{x}^*)$ for some $n \geq 0$. Setting $e_n^i=x_n^i-\widehat{x}^*$ and $e_n=x_n-\widehat{x}^*$, from equation $(\ref{eq:s26})$ and assumption C2, we get
\begin{align*}
e_n^i&=e_n+x_n^i-x_n=e_n-\left(A'_i(x_n)+\frac{\alpha_n}{N} I\right)^{-1}\left(A_i(x_n)+\frac{\alpha_n}{N} (x_n -x^0_i)\right)\notag\\ 
&= \left(A'_i(x_n)+\frac{\alpha_n}{N} I\right)^{-1}\left[\left(A'_i(x_n)+\frac{\alpha_n}{N} I\right)e_n-\left(A_i(x_n)+\frac{\alpha_n}{N} (x_n -x^0_i)\right)\right]\notag\\
&=\left(A'_i(x_n)+\frac{\alpha_n}{N} I\right)^{-1}\left[\frac{\alpha_n}{N} (x^0_i-\widehat{x}^*)+A'_i(x_n)e_n-A_i(x_n)\right]\notag\\
&=\frac{\alpha_n}{N}\left(A'_i(x_n)+\frac{\alpha_n}{N} I\right)^{-1}A'_i (\widehat{x}^*)v_i+\left(A'_i(x_n)+\frac{\alpha_n}{N} I\right)^{-1}\left(A'_i(x_n)e_n-A_i(x_n)\right).\label{eq:s28}
\end{align*}
Using Lemma $\ref{lem.inequ.phu}$, from the last inequality we obtain
\begin{equation}\label{eq:s29}
\left\|e_n^i\right\|\le \frac{\alpha_n}{N}\left\|\left(A'_i(x_n)+\frac{\alpha_n}{N} I\right)^{-1}A'_i (\widehat{x}^*)\right\|\left\|v_i\right\|+\frac{N}{\alpha_n}\left\|A'_i(x_n)e_n-A_i(x_n)\right\|.
\end{equation}
Obviously, if  $x_t:=x_n+t(\widehat{x}^*-x_n),$ where $ 0\le t\le 1$, then $$\left\|x_t-\widehat{x}^*\right\|=(1-t)\left\|x_n-\widehat{x}^*)\right\|\le r,$$ hence $x_t \in B_r(\widehat{x}^*)$. \\
The second term of the right-hand side of $(\ref{eq:s29})$ can be estimated as
\begin{align}
\frac{N}{\alpha_n}\left\|A'_i(x_n)e_n-A_i(x_n)\right\|&= \frac{N}{\alpha_n}\left\|A_i(\widehat{x}^*)-A_i(x_n)+A'_i(x_n)(e_n)\right\|\notag\\ 
&=\frac{N}{\alpha_n}\left\|\int_0^1 \left(A'_i(x_n)-A'_i(x_t)\right)e_n dt \right\|\notag\\
&\le \frac{N}{\alpha_n}\int_0^1 Kt\left\|e_n\right\|^2 dt =\frac{KN}{2\alpha_n}\left\|e_n\right\|^2. \label{eq:s210}
\end{align}
On the other hand, we have
\begin{align*}
\left(A'_i(x_n)+\frac{\alpha_n}{N} I\right)^{-1}&-\left(A'_i(\widehat{x}^*)+\frac{\alpha_n}{N} I\right)^{-1}=\notag\\ 
&=\left(A'_i(x_n)+\frac{\alpha_n}{N} I\right)^{-1}\left[A'_i(\widehat{x}^*)-A'_i(x_n)\right]\left(A'_i(\widehat{x}^*)+\frac{\alpha_n}{N} I\right)^{-1}.
\end{align*}
Therefore, 
\begin{align*}
&\left\|\left(A'_i(x_n)+\frac{\alpha_n}{N} I\right)^{-1}A' (\widehat{x}^*)\right\| \le \left\|\left(A'_i(\widehat{x}^*)+\frac{\alpha_n}{N} I\right)^{-1}A'_i (\widehat{x}^*)\right\|\notag \\
&+\left\|\left(A'_i(x_n)+\frac{\alpha_n}{N} I\right)^{-1}\left[A'_i(\widehat{x}^*)-A'_i(x_n)\right]\left(A'_i(\widehat{x}^*)+\frac{\alpha_n}{N} I\right)^{-1}A'_i (\widehat{x}^*)\right\|.
\end{align*}
From the last inequality and using Lemma $\ref{lem.inequ.phu}$ as well as  assumption C1, we get
\begin{equation}\label{eq:s213}
\left\|\left(A'_i(x_n)+\frac{\alpha_n}{N} I\right)^{-1}A'_i(\widehat{x}^*)\right\| \le 2+\frac{2KN}{\alpha_n}\left\|e_n\right\|.
\end{equation}
Combining $(\ref{eq:s29}),(\ref{eq:s210}) $ and $(\ref{eq:s213})$, one has
\begin{equation}\label{eq:s214}
\left\|e_n^i\right\|\le \frac{2\alpha_n\left\|v_i\right\|}{N}+2K\left\|v_i\right\|\left\|e_n\right\|+\frac{NK}{2\alpha_n}\left\|e_n\right\|^2.
\end{equation}
By $(\ref{eq:s27})$ and $(\ref{eq:s214})$, we obtain
\begin{align}
\left\|e_{n+1}\right\|=\left\|x_{n+1}-\widehat{x}^*\right\|\le \frac{1}{N}\sum_{i=1}^N \left\|e_n^i\right\| &\le \frac{2\alpha_n\sum_{i=1}^N\left\|v_i\right\|}{N^2} \notag\\ 
&+\frac{2K\sum_{i=1}^N\left\|v_i\right\|}{N}\left\|e_n\right\|+\frac{NK}{2\alpha_n}\left\|e_n\right\|^2.\label{eq:s215}
\end{align}
 
Setting $\omega_n=\frac{N\left\|e_n\right\|}{\alpha_n}$  and using assumption  C3, from $(\ref{eq:s215})$ we find
\begin{align}
\omega_{n+1}&\le \frac{2\sum_{i=1}^N\left\|v_i\right\|}{N}\left(\frac{\alpha_n}{\alpha_{n+1}}\right)+\frac{2K\sum_{i=1}^N\left\|v_i\right\|}{N}\left(\frac{\alpha_n}{\alpha_{n+1}}\right)\omega_n+\frac{K}{2}\left(\frac{\alpha_n}{\alpha_{n+1}}\right)\omega_n^2\notag\\ 
& \le \frac{2\rho\sum_{i=1}^N\left\|v_i\right\|}{N}+\frac{2\rho K\sum_{i=1}^N\left\|v_i\right\|}{N}\omega_n+\frac{K\rho}{2}\omega_n^2\notag\\ 
&=a+b\omega_n+c\omega_n^2,\label{eq:s216}
\end{align}
where 
\begin{equation*}\label{eq:s217}
a=\frac{2\rho\sum_{i=1}^N\left\|v_i\right\|}{N}, ~ b=\frac{2\rho K\sum_{i=1}^N\left\|v_i\right\|}{N}, ~  c=\frac{K\rho}{2}.
\end{equation*}
If $\sum_{i=1}^N\left\|v_i\right\|$ is small enough, then $a,b$ will be small, hence
\begin{equation}\label{eq:s218}
b+2\sqrt{ac}<1, \frac{2a\alpha_0}{N}\le r\left(1-b+\sqrt{(1-b)^2-4ac}\right).
\end{equation}
Now if $x_0$ is sufficiently close to $\widehat{x}^*$ then
\begin{equation*}\label{eq:s219}
\omega_0=\frac{N\left\|e_0\right\|}{\alpha_0}=\frac{N\left\|x_0-\widehat{x}^*\right\|}{\alpha_0}\le M_{+}:=\frac{(1-b+\sqrt{(1-b)^2-4ac})}{2c}.
\end{equation*}
Lemma $\ref{lem.sequence2}$ applied to $(\ref{eq:s216})$ ensures that
\begin{equation*}\label{eq:s220}
\omega_n :=\frac{N\left\|e_n\right\|}{\alpha_n}\le l:=max\left\{\omega_0, M_{-}\right\}, \forall n \ge 0,
\end{equation*}
where $M_{-}=\frac{(1-b-\sqrt{(1-b)^2-4ac})}{2c}=\frac{2a}{(1-b+\sqrt{(1-b)^2-4ac})/2c}$.\\
In particular,
\begin{equation}\label{eq:s221}
\left\|x_{n+1}-\widehat{x}^*\right\|=\left\|e_{n+1}\right\|=\frac{\omega_{n+1}\alpha_{n+1}}{N}\le \frac{l\alpha_0}{N}.
\end{equation}
Observing that $\frac{\omega_{0}\alpha_{0}}{N}=\left\|x_{0}-\widehat{x}^*\right\|\le r$. From $(\ref{eq:s218})$, we have
\begin{equation*}\label{eq:s222}
\frac{M_{-}\alpha_0}{N}=\frac{2a\alpha_0}{N\left(1-b+\sqrt{(1-b)^2-4ac}\right)}\le r.
\end{equation*} 
Therefore, $\frac{l\alpha_0}{N}\le r$, hence $x_{n+1}\in B_r(\widehat{x}^*)$. Thus, the estimate $\omega_n \le l$ yields 
\begin{equation*}\label{eq:s223}
\left\|e_{n}\right\|=\frac{\omega_{n}\alpha_{n}}{N}\le\frac{l\alpha_{n}}{N}=O\left(\alpha_{n}\right).
\end{equation*}
The proof of Theorem $\ref{theo.exact.smooth}$ is complete.
\end{proof}
Now, assume that $A_i(x)=F_i(x)-f_i$ and instead of $\left(F_i,f_i\right)$, we only have approximations $\left(F^h_i,f^\delta_i\right)$, such that
\begin{equation}\label{eq:s224}
\left\|f^\delta_i - f_i\right\|\le \delta,
\end{equation}
and the operators $F^{h}_i,   (i = 1,\ldots, N),$ are accretive on a real Banach space $X$ and Fr$\acute{\rm e}$chet differentiable in a closed ball $B_r(\widehat{x}^*).$ Moreover, suppose that
\begin{equation}\label{eq:s230}
\left\|F^{h'}_i(x)-F^{h'}_i(y)\right\| \le K\left\|x-y\right\|\,\, \forall x,y \in B_r(\widehat{x}^*),i=1,2,\ldots,N.
\end{equation}
Also, assume that
\begin{align}
&\left\|F_i^h(\widehat{x}^*)-F_i(\widehat{x}^*)\right\|\le h,  \label{eq:s231}\\
&\left\|F_i^{h'}(\widehat{x}^*)-F^{'}_i(\widehat{x}^*)\right\|\le h.\label{eq:s232}
\end{align}
Given a starting point $x_0\in B_r(\widehat{x}^*)$, we define a sequence $\left\{x_n\right\}$:
\begin{align}
&\tilde{A}_i(x_n)+\frac{\alpha_n}{N} (x_n -x^0_i)+(\tilde{A}'_i(x_n)+\frac{\alpha_n}{N} I)(x_{n}^i-x_n)=0,i=1,2,\ldots,N \label{eq:s233}\\
&x_{n+1}=\frac{1}{N}\sum_{i=1}^N x_n^i,n=1,2,\ldots, \label{eq:s234}
\end{align}
where $\tilde{A}_i(x)=F_i^h(x)-f_i^{\delta}$. We define the stopping index $N(\delta,h)$ as
\begin{equation}\label{eq:s235}
N(\delta,h)=\max\left\{n:\alpha_n^2 \ge \frac{\delta+h}{\eta}\right\},
\end{equation}
where $\eta$ is a fixed parameter.\\
By the same argument as in the proof of Theorem $\ref{theo.exact.smooth}$, we come to the following estimate for iteration $(\ref{eq:s233})$ and $(\ref{eq:s234})$ 
\begin{equation}\label{eq:s236}
\left\|e_n^i\right\|\le \frac{2\alpha_n\left\|v_i\right\|}{N}+2\left\|v_i\right\|\left\|F_i^{'}(\widehat{x}^*)-F_i^{h'}(x_n)\right\|+\frac{N}{\alpha_n}\left\| F_i^{h'}(x_n)e_n-F_i^{h}(x_n)+f^\delta\right\|
\end{equation}
From conditions $(\ref{eq:s230})$ and $(\ref{eq:s232})$, we have
\begin{equation}\label{eq:s237}
\left\|F_i^{'}(\widehat{x}^*)-F_i^{h'}(x_n)\right\|\le \left\|F_i^{'}(\widehat{x}^*)-F_i^{h'}(\widehat{x}^*)\right\|+\left\|F_i^{h'}(\widehat{x}^*)-F_i^{h'}(x_n)\right\|\le h+K\left\|e_n\right\|.
\end{equation}
And using conditions $(\ref{eq:s224})$ and $(\ref{eq:s231})$, we have
\begin{align}
\left\| F_i^{h'}(x_n)e_n-F_i^{h}(x_n)+f^\delta\right\|&\le \left\| F_i^{h'}(x_n)e_n-F_i^{h}(x_n)+F_i^{h}(\widehat{x}^*)\right\|+\notag\\
&+\left\|F_i^{h}(\widehat{x}^*)-F_i(\widehat{x}^*)\right\|+\left\|f-f^\delta\right\|\le \frac{K}{2} \left\|e_n\right\|^2+h+\delta.\label{eq:s238}
\end{align}
Combining $(\ref{eq:s236}), (\ref{eq:s237}), (\ref{eq:s238})$ and noting that $\left\|e_{n+1}\right\|\le\frac{1}{N}\sum_{i=1}^N \left\|e_{n}^i\right\|$, we obtain
\begin{equation}\label{eq:s239}
\left\|e_{n+1}\right\|\le\left[\frac{2\sum_{i=1}^N\left\|v_i\right\|}{N}\left(h+\alpha_n\right)+\frac{h+\delta}{\alpha_n}\right]+\frac{2L\sum_{i=1}^N\left\|v_i\right\|}{N}\left\|e_{n}\right\|+\frac{K}{2\alpha_n}\left\|e_{n}\right\|^2.
\end{equation}
From $(\ref{eq:s235})$ we get $\frac{h}{\alpha_n}\le \alpha_n\eta \le \alpha_0\eta$. Setting $\omega_n=\frac{N\left\|e_n\right\|}{\alpha_n}$ we can rewrite $(\ref{eq:s239})$ as
\begin{align*}
\omega_{n+1} &\le\frac{\alpha_n}{\alpha_{n+1}}\left[\frac{2\sum_{i=1}^N\left\|v_i\right\|}{N}\left(\frac{h}{\alpha_{n}}+1\right)+\frac{h+\delta}{\alpha_n^2}+\frac{2L\sum_{i=1}^N\left\|v_i\right\|}{N}\omega_{n}+\frac{K}{2}\omega_{n}^2\right]\notag\\ 
& \le \rho\left[\left(\frac{2\sum_{i=1}^N\left\|v_i\right\|}{N}\left(\alpha_0\eta+1\right)+\eta\right)+\frac{2K\sum_{i=1}^N\left\|v_i\right\|}{N}\omega_{n}+\frac{K}{2}\omega_{n}^2\right]\notag\\ 
&=a+b\omega_n+c\omega_n^2,
\end{align*}
where 
\begin{equation*}\label{eq:s228}
a=\rho\left(\frac{2\sum_{i=1}^N\left\|v_i\right\|}{N}\left(\alpha_0\eta+1\right)+\eta\right),b=\frac{2\rho K\sum_{i=1}^N\left\|v_i\right\|}{N}, c=\frac{K\rho}{2}.
\end{equation*}
Again, if $\sum_{i=1}^N\left\|v_i\right\|$ and $\eta$ are small enough and $x_0$ is sufficiently close to $\widehat{x}^*,$ then arguing similarly as in the proof of Theorem $\ref{theo.exact.smooth}$, we can conclude that  $x_{n+1}\in B_r(\widehat{x}^*)$ and $\left\|x_{n}-\widehat{x}^*\right\|=O\left(\alpha_{n}\right)$ for $n=1,2,\ldots,N(\delta,h)$. Thus, we come to the following convergence result.

\begin{theorem}\label{theo.operperturb.convergence}
Assume that the assumptions {\rm C1-C3} hold for the exact operators $F_i,  i=1,\ldots,N, $  and conditions $(\ref{eq:s224})$, $(\ref{eq:s230}), (\ref{eq:s231})$ and $(\ref{eq:s232})$ are satisfied. If  $\sum_{i=1}^N \left\|v_i\right\|$ is sufficiently small and $x_0$ is close enough to $\widehat{x}^*$, then there holds the following estimate
\begin{equation}\label{eq:s229}
\left\|x_{n}-\widehat{x}^*\right\|=O\left(\alpha_{n}\right),\quad n=1,2,\ldots,N(\delta,h).
\end{equation}
\end{theorem}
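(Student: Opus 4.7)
The plan is to mirror the proof of Theorem \ref{theo.exact.smooth} almost verbatim, but replacing every occurrence of $A_i$ by $\tilde A_i(x)=F_i^h(x)-f_i^\delta$ and carefully tracking the extra error terms introduced by the data perturbations $(h,\delta)$. I would proceed by induction on $n \leq N(\delta,h)$, assuming $x_n\in B_r(\widehat x^*)$ and showing both $x_{n+1}\in B_r(\widehat x^*)$ and $\|x_{n+1}-\widehat x^*\|=O(\alpha_{n+1})$.

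First, writing $e_n^i=x_n^i-\widehat x^*$ and $e_n=x_n-\widehat x^*$, the iteration $(\ref{eq:s233})$ rearranges (via the source condition C2 for $A_i'(\widehat{x}^*)=F_i'(\widehat{x}^*)$) to
\begin{equation*}
e_n^i=\frac{\alpha_n}{N}\bigl(\tilde A_i'(x_n)+\tfrac{\alpha_n}{N} I\bigr)^{-1}F_i'(\widehat{x}^*)v_i+\bigl(\tilde A_i'(x_n)+\tfrac{\alpha_n}{N} I\bigr)^{-1}\bigl(\tilde A_i'(x_n)e_n-\tilde A_i(x_n)\bigr).
\end{equation*}
Applying Lemma \ref{lem.inequ.phu} yields the bound $(\ref{eq:s236})$. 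Next I would estimate the two ``correction'' terms separately: the operator-difference term by splitting $F_i'(\widehat{x}^*)-F_i^{h'}(x_n)$ through the point $F_i^{h'}(\widehat{x}^*)$ and using $(\ref{eq:s232})$ together with the Lipschitz hypothesis $(\ref{eq:s230})$, giving $(\ref{eq:s237})$; and the residual term $\|F_i^{h'}(x_n)e_n-F_i^h(x_n)+f^\delta\|$ by inserting $\pm F_i^h(\widehat{x}^*)$ and $\pm F_i(\widehat{x}^*)$, applying a Taylor-type remainder bound with $(\ref{eq:s230})$, and then invoking $(\ref{eq:s231})$ and $(\ref{eq:s224})$ to obtain $(\ref{eq:s238})$. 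Averaging over $i$ and using C3 (so that $\alpha_n/\alpha_{n+1}\le\rho$), together with the key inequality $h/\alpha_n\le\alpha_n\eta\le\alpha_0\eta$ supplied by the definition $(\ref{eq:s235})$ of $N(\delta,h)$, produces a quadratic recursion
\begin{equation*}
\omega_{n+1}\le a+b\omega_n+c\omega_n^2,\qquad \omega_n:=\tfrac{N\|e_n\|}{\alpha_n},
\end{equation*}
with explicit constants $a,b,c$ depending linearly on $\sum_i\|v_i\|$, on $\eta$, and on $K,\rho$.

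The main obstacle, and the only real difference from the exact-data case, is the control of $(h+\delta)/\alpha_n^2$: this ratio is exactly what forces the introduction of the stopping index $N(\delta,h)$ in $(\ref{eq:s235})$, and it is the reason one only gets an estimate for $n\le N(\delta,h)$ rather than a genuine convergence statement. Once $\eta$ and $\sum_i\|v_i\|$ are chosen small enough so that $b+2\sqrt{ac}<1$ and $\frac{2a\alpha_0}{N}\le r(1-b+\sqrt{(1-b)^2-4ac})$, and once $x_0$ is taken close enough to $\widehat x^*$ so that $\omega_0\le M_+$, Lemma \ref{lem.sequence2} yields $\omega_n\le l:=\max\{\omega_0,M_-\}$ for every $n\le N(\delta,h)$. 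This gives simultaneously $\|x_{n+1}-\widehat x^*\|\le l\alpha_0/N\le r$ (closing the induction and keeping the iterates in $B_r(\widehat{x}^*)$) and the desired estimate $\|x_n-\widehat{x}^*\|=\omega_n\alpha_n/N=O(\alpha_n)$ in the admissible range $n\le N(\delta,h)$, completing the proof.
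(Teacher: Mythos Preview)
Your proposal is correct and follows essentially the same route as the paper's proof: the paper derives exactly the estimate $(\ref{eq:s236})$ by the resolvent-splitting argument from Theorem~\ref{theo.exact.smooth}, bounds the two perturbation terms via $(\ref{eq:s237})$ and $(\ref{eq:s238})$ just as you describe, and then passes to the quadratic recursion in $\omega_n=N\|e_n\|/\alpha_n$ using the stopping rule $(\ref{eq:s235})$ to control $(h+\delta)/\alpha_n^2\le\eta$, before invoking Lemma~\ref{lem.sequence2}. Your identification of the stopping index as the device that replaces genuine convergence by the finite-range estimate is exactly the paper's point.
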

Finally, taking into account the stopping rule (3.18), from Theorem $\ref{theo.operperturb.convergence}$ we obtain the convergence rate for the parallel regularized Newton-type method (3.16)-(3.17) in noisy data cases.
\begin{corollary}
Assume that all conditions of Theorem $\ref{theo.operperturb.convergence}$ are fulfilled. Then
\begin{equation}\label{eq:s229*}
\left\|x_{n^*}-\widehat{x}^*\right\|=O\left(\delta^{1/2}+h^{1/2}\right),
\end{equation}
where $n_* = N(\delta,h) +1.$
\end{corollary}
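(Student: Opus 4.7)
The plan is to chain Theorem $\ref{theo.operperturb.convergence}$'s convergence rate with the balancing provided by the stopping rule $(\ref{eq:s235})$. First I would verify that the $O(\alpha_n)$ estimate extends to $n = n_* = N(\delta,h) + 1$: the one-step recursion $\omega_{n+1} \le a + b\omega_n + c\omega_n^2$ in the proof of Theorem $\ref{theo.operperturb.convergence}$ relies only on the two inequalities $h/\alpha_n \le \alpha_0 \eta$ and $(h+\delta)/\alpha_n^2 \le \eta$, both of which hold at $n = N(\delta,h)$ by the very definition of the stopping index. Applying Lemma $\ref{lem.sequence2}$ exactly as in the proof of Theorem $\ref{theo.exact.smooth}$ therefore yields $\omega_{n_*} \le l$, whence $\|x_{n_*} - \widehat{x}^*\| \le l\alpha_{n_*}/N = O(\alpha_{n_*})$.

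Next I would bound $\alpha_{n_*}$ from above by means of the stopping rule itself. Since $n_* = N(\delta,h) + 1$ fails to satisfy the inequality defining the maximum in $(\ref{eq:s235})$, we have $\alpha_{n_*}^2 < (\delta + h)/\eta$. Combining this with the elementary estimate $\sqrt{\delta + h} \le \sqrt{\delta} + \sqrt{h}$ gives
$$\alpha_{n_*} < \eta^{-1/2}\sqrt{\delta+h} \le \eta^{-1/2}\bigl(\delta^{1/2} + h^{1/2}\bigr).$$
Plugging this into the $O(\alpha_{n_*})$ bound immediately produces $\|x_{n_*} - \widehat{x}^*\| = O(\delta^{1/2} + h^{1/2})$, which is the claim of $(\ref{eq:s229*})$.

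I do not anticipate a real obstacle. The only subtlety is that Theorem $\ref{theo.operperturb.convergence}$ is explicitly stated for indices $n \le N(\delta,h)$ while the corollary is about $n_* = N(\delta,h) + 1$; this is handled by the one-step extension of the inductive argument described above, which requires no additional hypothesis beyond what is already in place for $n = N(\delta,h)$. After that, the remainder is just the standard balancing step: the stopping rule is chosen precisely so that the approximation rate $\alpha_{n_*}$ matches the $(\delta+h)^{1/2}$ scale, and the two $O$-estimates compose to give the stated convergence rate.
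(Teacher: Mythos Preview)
Your proposal is correct and follows exactly the route the paper intends: combine the $O(\alpha_n)$ bound of Theorem~\ref{theo.operperturb.convergence} with the stopping rule~(\ref{eq:s235}) to convert $\alpha_{n_*}$ into $(\delta+h)^{1/2}$. The paper itself gives no more than the sentence preceding the corollary as proof, so your write-up simply fills in the details---in particular your observation that the recursion step is still valid at $n=N(\delta,h)$, so that $\omega_{n_*}\le l$ and hence $\|x_{n_*}-\widehat{x}^*\|=O(\alpha_{n_*})$, is the one point the paper leaves implicit.
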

\section{CONCLUSION}
Most of existing solution methods for systems of ill posed operator equations deal with Hilbert spaces. In this paper we investigate 
two parallel iterative regularization methods and a parallel regularized Newton-type method for solving systems of equations involving 
$m$-accretive operators in Banach spaces. The convergence analysis of the proposed methods in both free-noise and noisy data cases is provided.
\section {ACKNOWLEDGMENTS}
The first author of this manuscript wishes to express his gratitude to Vietnam Institute for Advanced Study in Mathematics for  financial support.
\vskip 1.0cm

\end{document}